\documentclass[11pt]{elsarticle}

\usepackage[T1]{fontenc}

\usepackage{amsfonts}
\usepackage{amsthm}
\usepackage{amsmath}
\usepackage{amssymb}
\usepackage{bm}
\usepackage{wasysym}
\usepackage{accents}
\usepackage[draft,margin=false,inline=true,author=]{fixme}
\fxusetheme{colorsig}

\usepackage{epsfig}
\usepackage[percent]{overpic}
\usepackage{psfrag}

\usepackage{titlesec}
\titleformat{\paragraph}
{\normalfont\normalsize\bfseries}{\theparagraph}{1em}{}
\titlespacing*{\paragraph}
{0pt}{3.25ex plus 1ex minus .2ex}{1.5ex plus .2ex}

\usepackage{verbatim}
\usepackage{color}
\usepackage{tikz}
\usetikzlibrary{arrows}
\usepackage{tcolorbox}
\usepackage{todonotes}

\usepackage{xcolor,soul}
\usepackage{graphicx}
\usepackage{booktabs}
\usepackage{todonotes}
\usepackage{float}
\usepackage{latexsym}
\usepackage{tabu}
\usepackage{tcolorbox}
\usepackage{multirow}
\usepackage{varwidth}

\usepackage{subcaption}

\definecolor{royalpurple}{rgb}{0.47, 0.32, 0.66}
\definecolor{pastelgreen}{rgb}{0.47, 0.87, 0.47}
\definecolor{cornellred}{rgb}{0.7, 0.11, 0.11}
\definecolor{pastelorange}{rgb}{1.0, 0.7, 0.28}
\definecolor{darkred}{rgb}{0.55, 0.0, 0.0}
\definecolor{darkpastelgreen}{rgb}{0.01, 0.75, 0.24}
\definecolor{blun}{cmyk}{0.8, 0.5, 0, 0.7}
\definecolor{darkcyan}{rgb}{0.0, 0.55, 0.55}

\usepackage{placeins}

\usepackage[all]{xy}
\usepackage{makeidx}
\usepackage{enumerate}
\usepackage[letterpaper,margin=0.9in]{geometry}
\usepackage[bookmarks,colorlinks,pdfauthor={GianninoIuorioSoresina},linkcolor=black,citecolor=blun,urlcolor=black]{hyperref}

\usepackage{booktabs, makecell}




\theoremstyle{plain}
\newtheorem{thm}{Theorem}[section]

\newtheorem{prop}[thm]{Proposition}

\theoremstyle{definition}

\newtheoremstyle{myremark}
  {3pt}
  {3pt}
  {\small \rmfamily}
  {5pt}
  {\rmfamily}
  {:}
  {.5em}
  {}

\theoremstyle{myremark}
\newtheorem*{remark}{\textit{Remark}}






\arraycolsep=2pt

\newcommand{\be}{\begin{equation}}
\newcommand{\ee}{\end{equation}}
\newcommand{\benn}{\begin{equation*}}
\newcommand{\eenn}{\end{equation*}}
\newcommand{\bea}{\begin{eqnarray}}
\newcommand{\eea}{\end{eqnarray}}
\newcommand{\beann}{\begin{eqnarray*}}
\newcommand{\eeann}{\end{eqnarray*}}

\newcommand{\myendex}{$\blacklozenge$\end{ex}}
\newcommand{\myendexerc}{$\lozenge$\end{exerc}}
\newcommand{\myendpexerc}{$\lozenge$\end{pexerc}}





\journal{Physica D}

\title{Beyond water limitation in vegetation--autotoxicity patterning: \\ a cross-diffusion model}


\author[label1]{Francesco Giannino}\ead{giannino@unina.it}
\author[label2]{Annalisa Iuorio}\ead{annalisa.iuorio@uniparthenope.it}
\author[label3]{Cinzia Soresina}\ead{cinzia.soresina@unitn.it}

\affiliation[label1]{organization={University of Naples Federico II, Department of Agricultural Sciences},
            addressline={via Universit{\`a} 100}, 
            city={Portici (NA)},
            postcode={80055},
            country={Italy}}
            
\affiliation[label2]{organization={Parthenope University of Naples, Department of Engineering},
            addressline={Centro Direzionale Isola C4}, 
            city={Naples (NA)},
            postcode={80143},
            country={Italy}}
            
\affiliation[label3]{organization={University of Trento, Department of Mathematics},
            addressline={via Sommarive 14},
            city={Povo (TN)},
            postcode={38123},
            country={Italy}}
            
\begin{document}
\listoffixmes{}

\begin{frontmatter}

\begin{abstract}


\noindent
Many mathematical models describing vegetation patterns are based on biomass--water interactions, due to the impact of this limited resource in arid and semi-arid environments. However, in recent years, a novel biological factor called autotoxicity has proved to play a key role in vegetation spatiotemporal dynamics, particularly by inhibiting biomass growth and increasing its natural mortality rate. In a standard reaction-diffusion framework, biomass-toxicity dynamics alone are unable to support the emergence of stable spatial patterns. \\ 
In this paper, we derive a cross-diffusion model for biomass and toxicity dynamics as the fast-reaction limit of a three-species system involving dichotomy and different time scales. Within this general framework, in addition to growth inhibition and extra-mortality already considered in previous studies, the additional effect of ``propagation reduction'' induced by autotoxicity on vegetation dynamics is obtained.  By combining linearised analysis, simulations, and continuation, we investigate the formation of spatial patterns. Thanks to the cross-diffusion term, for the first time, a spatial model based solely on biomass--toxicity feedback without explicit water dynamics supports the formation of stable (Turing) vegetation patterns for a wide range of parameter values.

\end{abstract}

\begin{keyword}
multiple scales \sep pattern formation \sep autotoxicity \sep cross-diffusion \sep fast-reaction limit \sep bifurcations \sep climate.

\vspace{.2cm}
\MSC[2020] 35K57 \sep 35B36 \sep 35B32 \sep 35Q92 \sep 65P30 \sep 92D40.
\end{keyword}

\end{frontmatter}

\section{Introduction}



Protecting, restoring and promoting sustainable use of terrestrial ecosystems are current challenges of uttermost importance. 
In this framework, vegetation plays a key role in preventing soil degradation and preserving ecosystems' resilience by acting as an ecological indicator: variations in its behaviour -- for instance, pattern formation -- can reveal the proximity of an ecosystem to a catastrophic shift, such as desertification (see e.g.~\cite{Kefi_2007, Kefi_2014}). 

Among others, mathematical models based on partial differential equations (PDEs) have proven to be invaluable tools to improve our understanding of the emergence of vegetation patterns and their link with so-called tipping points (see e.g.~\cite{Bastiaansen_2020_2, Martinez_Garcia_2023, Rietkerk_2021}). In particular, these models support the existence of different kinds of stationary and travelling patterns on both flat and sloped terrains, including front invasions, spots and gaps, and vegetation bands \cite{Jaibi_2020}. Most of these models focus on the interplay between biomass and water, due to the importance of this resource especially in arid environments (see \cite{Bastiaansen_2020_1, Bastiaansen_2019_1, Carter_2018, Eigentler_2020, Gandhi_2021, Gandhi_2018, Gilad_2007, Klausmeier_1999, Meron_2012, Sherratt.2010, von_Hardenberg_2001}, just to name a few). However, an additional ecological element has proved to play a crucial role in shaping plant communities even when water is not scarce, namely autotoxicity (see e.g.~\cite{Abbas_2025, Bonanomi_2014, Carteni_2012, Consolo_2023, Marasco_2020, Marasco_2014, Mazzoleni_2010, Mazzoleni_2014}). Among the hypotheses to explain autotoxicity, we consider here self-DNA due to litter decomposition as it has been widely acclaimed in recent years \cite{Mazzoleni_2015, Mazzoleni_2014}. This factor is, in fact, a key component of plant--soil negative feedback, able to foster biodiversity and justify the presence of several types of patterns observed in environments where water is not limited \cite{Allegrezza_2022, Salvatori_2023}. 
From the theoretical viewpoint, including an equation for autotoxicity is also able to simplify the construction of travelling pulse solutions using Geometric Singular Perturbation Theory \cite{grifo2025far}. Because of its relevant impact on vegetation dynamics, autotoxicity has been added to the standard biomass--water modelling approach, revealing the presence of dynamic, asymmetric patterns not present in the classical Klausmeier framework \cite{Abbas_2024, Carter_2023_2, Iuorio_2021, Marasco_2014}. On the other hand, reaction--diffusion models based only on biomass--water feedback cannot lead to the emergence of stable Turing patterns supported instead by the PDE models based on biomass--water interactions. A natural research question then arises: could a more detailed mathematical (and at the same time ecologically accurate) description of the interplay between biomass and autotoxicity lead to stable Turing patterns even without an explicit water dependence? In other words: can plant--soil negative feedback alone induce the emergence of Turing patterns, if appropriately modelled?

\vspace{.2cm}

This question can be rephrased in the context of pattern formation in reaction--diffusion models as: given a (simple) reaction part that does not present the activator--inhibitor structure for Turing instability, is it possible to modify the diffusion part to obtain stable non-homogenous solutions (Turing patterns)? In this regard, cross-diffusion terms may be the key ingredient, as revealed in several cases including e.g.~the SKT model for competing species~\cite{Fanelli_2013, Ritchie_2022, shigesada1979spatial, villar2024}. In particular, an interesting feature is that cross-diffusion terms can be obtained in different contexts as the singular limit of a ``mesoscopic'' (in terms of level of details in the description of the population interactions and biological processes) model incorporating a dichotomy in a species and multiple time-scales \cite{desvillettes2018, eliavs2021singular, eliavs2022aggregation, iida2006diffusion}. In the context of vegetation dynamics, the presence of autotoxicity induces necrosis in the plant's roots system \cite{Bonanomi_2022, Mazzoleni_2014}, hence reducing its propagation ability. In mathematical terms, by introducing this biological mechanism at the \textit{mesoscopic} scale (namely in the fast-reaction system), we obtain the \textit{macroscopic} model as its singular limit. Thanks to this approach in addition to growth inhibition and extra-mortality already considered in previous studies, the additional effect of ``propagation reduction'' induced by autotoxicity on vegetation dynamics is obtained. Thanks to the cross-diffusion term, for the first time, a spatial model based solely on biomass--water feedback without explicit water dynamics supports the formation of stable (Turing) vegetation patterns for a wide range of parameter values.
\vspace{.2cm}

The paper is organised as follows. In Section~\ref{sec:model}, we present the ``mesoscopic'' reaction--diffusion model and its fast-reaction limit. The emergence of non-homogeneous steady states, investigated by linearised analysis, is presented in Section~\ref{sec:TI}. A numerical investigation of the effect induced by autotoxicity on the stable patterns of biomass by means of propagation-reduction, growth-inhibition, and extra-mortality for a set of biologically meaningful parameters is illustrated in Section~\ref{sec:sim}, together with the bifurcation diagrams obtained by exploiting the continuation software \texttt{pde2path}. Finally, Section~\ref{sec:conclusion} is devoted to discussion and concluding remarks.

\section{The \textit{mesoscopic} model and its fast-reaction limit}\label{sec:model}
The goal of this section is to obtain a macroscopic model exploiting dichotomy and time-scale arguments. To this aim, we fix a sufficiently regular spatial domain $\Omega \subset \mathbb{R}^n, \, n=1,2$. Our state variables consist of the roots biomass and the concentration of autotoxicity (for simplicity named toxicity throughout the paper) in the soil; in particular, we consider two states for the roots, namely \emph{healthy} (non-exposed) and \emph{exposed} to the toxicity. These variables are denoted by $R_h=R_h(x,t)$, $R_e=R_e(x,t)$, and $T=T(x,t)$, respectively, where $x\in\Omega$ and $t>0$. The total roots biomass is indicated by $R=R_h+R_e$.

The growth of both healthy and exposed roots is modelled by a logistic function with reference total biomass $\hat{R}$. The maximum growth rate of the healthy roots is denoted by $g$, while it is assumed that exposed roots have a smaller maximum growth rate denoted by $g-\gamma$ where $0\leq \gamma \leq g$. We also consider a natural root mortality rate $d$ for both healthy and exposed roots and an additional mortality term for exposed roots with a rate $s$ because of exposure to toxicity. Based on biological considerations, we assume $g>d$. In addition to these processes, we also consider a possible switch between healthy and exposed roots: healthy roots exposed to toxicity pass to the exposed state, while roots exposed to toxicity can recover if the toxicity level is low and their exposure time is short enough. In particular, we assume that the state switch happens on a faster time scale than growth, mortality, and propagation. These assumptions are modelled by the time-scale parameter $\varepsilon\ll 1$ and the non-negative toxicity-dependent transition rates $p$ and $q$. In detail, for their biological meaning, we assume that $p$ is an increasing function of $T$, while $q$ is a decreasing function of $T$. Moreover, the diffusion terms in the roots equations account for their propagation into the soil; we assume that the diffusion coefficient of the healthy state is larger than the one for the exposed roots, namely $0\leq \sigma<d_R$.
Regarding the toxicity dynamics, we consider its propagation into the soil with diffusion coefficient $d_T$, a growth term given by the decomposing biomass of healthy and exposed roots with conversion factor $c$, and a decay term with rate $k$. It is assumed that the model parameters are nonnegative quantities.


\begin{figure}[H]
\centering
\begin{tikzpicture}[even odd rule, line width=1pt, x=10pt,y=10pt]
		\draw[rounded corners, fill=gray!30!white] (-7,5) rectangle (21,11);
		\draw[rounded corners, fill=gray!30!white] (4,-3) rectangle (10,1);
		\draw[rounded corners, fill=white] (-6,6) rectangle (0,10);
		\draw[rounded corners, fill=white] (14,6) rectangle (20,10);
        
		\draw[<-, dashed] (7,6.5) -- (7,1);
		\draw[-to] (7,-3) -- (7,-5);
		\draw node at (7,11.8)[] {Roots ($R$)};
		\draw node at (-3,8.5)[] {healthy};
            \draw node at (-3,7.2)[] {($R_h$)};
		\draw node at (17,8.5)[] {exposed};
            \draw node at (17,7.2)[] {($R_e$)};
		\draw node at (7,-0.5)[] {Toxicity};
		\draw node at (7,-2)[] {($T$)};
        
            \draw node at (7,8.8)[] {\scriptsize{exposed/recovered}};
            \draw node at (7,7.2)[] {\scriptsize{roots production}};
            \draw[<->] (0,8) -- (14,8);

            \draw [->] (-3,6) to [out=270,in=180] (4,-1);
            \draw [->] (17,6) to [out=-90,in=0] (10,-1);

            \draw[fill=white,draw=none] (-4,1.3) rectangle (0,3.5);
            \draw[fill=white,draw=none] (14,1.3) rectangle (20,3.5);
            
            \draw node at (16,3)[] {\scriptsize{biomass death}};
            \draw node at (16,2)[] {\scriptsize{toxicity production}};
            \draw node at (-2,3)[] {\scriptsize{biomass death}};
            \draw node at (-2,2)[] {\scriptsize{toxicity production}};

	\end{tikzpicture}
\caption{Schematic representation of the fast-reaction model in Eq.~\eqref{eq:fastreaction} describing the interactions and feedback between healthy ($R_h$) and exposed ($R_e$) roots and toxicity ($T$).} 
\label{fig:model} 
\end{figure}
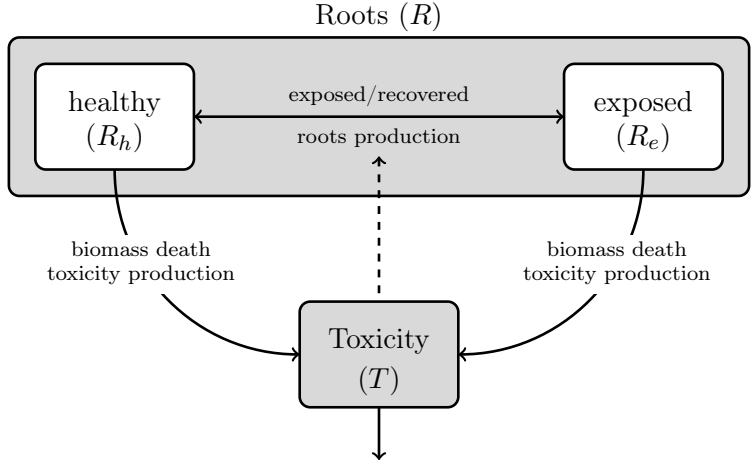

The above interactions between the three state variables, also summarised in Figure~\ref{fig:model}, are described by the following system of PDEs
\begin{equation}
\begin{cases}
\begin{aligned}
\partial_t R_h &=d_R \Delta_x R_h 
    +g R_h \left(1-\dfrac{R}{\hat{R}}\right)-dR_h-\dfrac{1}{\varepsilon}\left(p(T)R_h-q(T)R_e\right), & x \in \Omega, \, t>0,\\[0.3cm]
\partial_t R_e&=(d_R-\sigma) \Delta_x R_e 
    +(g-\gamma) R_e \left(1-\dfrac{R}{\hat{R}}\right) -(d+s)R_e+\dfrac{1}{\varepsilon}\left(p(T)R_h-q(T)R_e\right),& x \in \Omega, \, t>0,\\[0.3cm]
\partial_t T&= d_T \Delta_x T 
    +c(dR_h+dR_e+sR_e)-kT, &x \in \Omega, \, t>0,
\end{aligned}
\end{cases}\label{eq:fastreaction}
\end{equation}
together with homogeneous Neumann boundary conditions and nonnegative initial conditions respectively given by
\begin{equation} \label{eq:BIC}
\begin{aligned}
    &\partial_{\hat{n}} R_h(x,t) = \partial_{\hat{n}} R_e(x,t) = \partial_{\hat{n}} T(x,t) = 0, \qquad &&x \in \partial \Omega, \, t>0, \\
    &R_h(x,0) = R_h^0(x), \quad R_e(x,0) = R_e^0(x), \quad T(x,0) = T^0(x), \qquad &&x \in \Omega,
\end{aligned}
\end{equation}
being $\hat{n}$ the outer normal vector to the boundary of $\Omega$. 
Variables and parameters appearing in the model equations, their biological meaning and units are listed in Table~\ref{tab:pars}. 
\begin{table}[!ht]
    \centering
    \begin{tabular}{c|l|c}
    \toprule
    \thead{\textbf{Quantity}} & \thead{\textbf{Biological interpretation}} & \thead{\textbf{Units}} \\
    \toprule
        $R$ & roots biomass density, $R=R_e+R_h$ & ${\rm kg} \, {\rm m}^{-2}$\\
        $R_h$ & healthy roots biomass density & ${\rm kg} \, {\rm m}^{-2}$\\
        $R_e$ & exposed roots biomass density & ${\rm kg} \, {\rm m}^{-2}$\\
        $\hat{R}$ & reference total roots biomass density & ${\rm kg} \, {\rm m}^{-2}$\\
        $T$ & toxicity concentration & ${\rm g} \, {\rm m}^{-2}$\\
        $\hat{T}$ & critical toxicity concentration & ${\rm g} \, {\rm m}^{-2}$\\
        $g$ & roots biomass maximum growth rate & ${\rm year}^{-1}$ \\
        $\gamma$ & roots biomass growth rate reduction due to toxicity & ${\rm year}^{-1}$ \\
        $d$ & roots biomass mortality rate & ${\rm year}^{-1}$ \\
        $s$ & extra mortality of roots biomass due to toxicity & ${\rm year}^{-1}$ \\
        $c$ & conversion factor of decomposed roots biomass into toxicity & ${\rm g} \, {\rm kg}^{-1}$ \\
        $k$ & toxicity decay rate & ${\rm year}^{-1}$ \\
        $d_R$ & roots propagation coefficient & ${\rm m}^2 \, {\rm year}^{-1}$ \\
        $\sigma$ & reduction of roots propagation coefficient due to toxicity & ${\rm m}^2 \, {\rm year}^{-1}$ \\
        $d_T$ & toxicity diffusion coefficient & ${\rm m}^2 \, {\rm year}^{-1}$ \\
        $\varepsilon$ & time-scale parameter regulating the switch between healthy and exposed roots & ${\rm year}$ \\
        $p(.),\, q(.)$ & toxicity-dependent transition rates & --\\
        \bottomrule
    \end{tabular}
    \caption{Variables and parameters appearing in the model equations~\eqref{eq:fastreaction}, their biological meaning and units.}
    \label{tab:pars}
\end{table}


We now consider the fast-reaction limit of System~\eqref{eq:fastreaction}. At a formal level, when $\varepsilon\to 0$, the fast-reaction part tends to a quasi-steady-state, namely we expect
$$p(T)R_h-q(T)R_e=0.$$
Since $R=R_h+R_e$, we can express $R_h$ and $R_e$ in terms of $R,\, T$ and of the transition rates $p(T),\, q(T)$, obtaining
\begin{equation}
R_h=\dfrac{q(T)}{p(T)+q(T)}R,\qquad R_e=\dfrac{p(T)}{p(T)+q(T)}R.
\end{equation}
Summing the equations for $R_h$ and $R_e$ in Eq.~\eqref{eq:fastreaction}, we obtain an equation for the total roots biomass~$R$, reducing the fast-reaction system to a two-species limiting system. In detail, denoting
\begin{equation}
  \theta(T):=\frac{p(T)}{p(T)+q(T)}, 
\end{equation}\label{eq:theta_pq}
the limiting system reads
\begin{equation} \label{eq:mainsys}
\begin{cases}
\partial_t R&= \Delta_x \left(\left(d_R -\sigma \theta(T)\right)R\right)
+\left(g-\gamma \theta(T)\right)R\left(1-\dfrac{R}{\hat{R}}\right)
-\left(d+s\theta(T)\right)R,\\[0.3cm]
\partial_t T&= d_T \Delta_x T +c\left(d+s\theta(T)\right)R-kT,
\end{cases}
\end{equation}
equipped with the following homogeneous Neumann boundary conditions and initial conditions
\begin{equation} \label{eq:BICfr}
\begin{aligned}
    &\partial_n R(x,t) = \partial_n T(x,t) = 0, \qquad &&x \in \partial \Omega, \, t>0, \\
    &R(x,0) = R^0(x), \quad T(x,0) = T^0(x), \qquad &&x \in \Omega.
\end{aligned}
\end{equation}
Note that the limiting system \eqref{eq:mainsys} presents a cross-diffusion term in the equation for $R$ due to $T$, as well as growth-inhibition and extra-mortality effects due to toxicity. In particular, it is worthwhile to note that the cross-diffusion term incorporates the dichotomy and fast-switching mechanism between healthy and exposed roots combined with reduced propagation of the exposed roots.

\begin{remark}
    A rigorous study of the fast-reaction system~\eqref{eq:fastreaction} and the cross-diffusion system~\eqref{eq:mainsys}, as well as of the convergence of the solution of the fast-reaction to the cross-diffusion system, has been carried out in \cite{morgan2024singular}.
\end{remark} 

\begin{remark}
    Alternatively, one can assume in the fast-reaction system that the propagation coefficient for the healthy roots is higher than the one for the exposed roots; the cross-diffusion limiting system is identical.
\end{remark}

The transition rates $p(T)$ and $q(T)$ determine the function $\theta(T)$ in the limiting cross-diffusion system. Due to their biological meaning, we consider
\begin{equation}
\begin{aligned}
&p(0)=0,\qquad  p(T)\to 1 \quad \textnormal{ when }T\to+\infty, \qquad \dfrac{\textnormal{d}p}{\textnormal{d}T}>0,\\
&q(0)=1, \qquad q(T)\to 0 \quad \textnormal{ when } T\to+\infty, \qquad \dfrac{\textnormal{d}q}{\textnormal{d}T}<0.
\end{aligned}
\label{eq:properties_pq}
\end{equation}
A possible choice fulfilling the above conditions (and adopted in the rest of the paper) is provided by
\begin{equation}
q(T)=1-p(T),\quad 
p(T)=\theta(T)=\begin{cases}
    T/\hat{T},&T\leq \hat{T},\\
    1, & T> \hat{T},
\end{cases}
\label{eq:pqt}
\end{equation}
where $\hat{T}$ denotes a critical concentration of toxicity. 
The precise value of $\hat{T}$ will be fixed in Eq.~\eqref{eq:T_hat} and clarified in Prop.~\ref{prop:pos}. 
Note that, being $\sigma<d_R$, this choice ensures that, in the diffusion term, $d_R-\sigma \theta(T)>0$.
We also finally remark that in the following we assume that the roots dynamics (below ground) correspond to/are a proxy for the vegetation dynamics (above ground).


\section{Cross-diffusion-induced instability}\label{sec:TI}

In this section, we derive the conditions under which cross-diffusion-induced instability occurs. To this aim, we first study the homogeneous model, focusing on the existence and stability of the equilibrium points. Subsequently, we investigate the destabilisation of the homogeneous state in the presence of diffusion; this analysis will be confirmed by numerical simulations and broadened with the support of the continuation software \texttt{pde2path}~\cite{Uecker_2021} in Sec.~\ref{sec:sim}.
\subsection{Homogeneous model}
In the absence of diffusion, system~\eqref{eq:mainsys} reads
\begin{equation} \label{eq:mainsyshom}
\begin{cases}
 \dot{R}&= 
\left(g-\gamma \theta(T)\right)R\left(1-\dfrac{R}{\hat{R}}\right)
-\left(d+s\theta(T)\right)R,\\[0.3cm]
\dot{T}&= c\left(d+s\theta(T)\right)R-kT,   
\end{cases}
\end{equation}
where $\dot{}={\rm d}/{\rm d}t$ and the corresponding initial conditions are given by
\begin{equation} \label{eq:IChom}
    R(0)=R^0, \quad T(0)=T^0,
\end{equation}
being $R^0, \, T^0$ nonnegative quantities.
In particular, we consider the function $\theta$ as in Eq.~\eqref{eq:pqt}. 
For this model, we show the following relevant results.
\begin{prop}[Positivity] \label{prop:pos}
    The solutions $R(t)$, $T(t)$ to Eq.~\eqref{eq:mainsyshom}--\eqref{eq:IChom} where $R^0, T^0\geq 0$ remain nonnegative for all times, i.e.~$R(t)$, $T(t)\geq 0$ $\forall t > 0$. In particular, assuming 
    \begin{equation}
    \hat{T}:=\dfrac{c(d+s)}{k}\hat{R}, \quad with \quad \dfrac{c(d+s)}{k}<1,
\label{eq:T_hat}
\end{equation}
the region $[0, \hat{R}] \times [0, \hat{T}]$ is positively invariant. 
\end{prop}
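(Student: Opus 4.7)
My plan is to establish the two claims in sequence: first, nonnegativity of $R$ and $T$ starting from nonnegative data, which uses only the structure of the vector field; then, invariance of the rectangle $[0,\hat R]\times[0,\hat T]$, for which the specific choice of $\hat T$ in \eqref{eq:T_hat} becomes essential. Throughout I would work on the maximal interval of existence of the unique solution guaranteed by Picard--Lindel\"of (the right-hand side is locally Lipschitz because $\theta$ in \eqref{eq:pqt} is continuous and piecewise affine), and rely on the a priori bounds produced below to extend the solution globally in time.

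For $R\geq 0$, I would write the $R$-equation as $\dot R = a(t)R$ with $a(t):=(g-\gamma\theta(T(t)))(1-R(t)/\hat R) - (d+s\theta(T(t)))$, so that
\begin{equation*}
R(t) = R^0 \exp\!\left(\int_0^t a(s)\, ds\right) \geq 0.
\end{equation*}
For $T\geq 0$, Duhamel's formula applied to the linear-in-$T$ equation $\dot T + kT = c(d+s\theta(T))R$ gives
\begin{equation*}
T(t) = e^{-kt}T^0 + \int_0^t e^{-k(t-s)}\, c\bigl(d+s\theta(T(s))\bigr)R(s)\, ds \geq 0,
\end{equation*}
since the integrand is nonnegative thanks to $R\geq 0$ and $\theta\geq 0$. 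This already proves the first part of the statement.

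For the invariance of $[0,\hat R]\times[0,\hat T]$ under the additional assumption that the initial data lie in this rectangle, I would use the standard subtangential (Nagumo) condition, checking that on each of the four sides the vector field is inward-pointing or tangent. The two axes have been handled above. On the side $R=\hat R$ the logistic factor vanishes and $\dot R = -(d+s\theta(T))\hat R\leq 0$. The delicate side is $T=\hat T$: using $\theta(\hat T)=1$ from \eqref{eq:pqt} together with the bound $R\leq\hat R$, I obtain
\begin{equation*}
\left.\dot T\right|_{T=\hat T} = c(d+s)R - k\hat T \leq c(d+s)\hat R - k\hat T = 0,
\end{equation*}
which is exactly why $\hat T$ is chosen as in \eqref{eq:T_hat}; the auxiliary assumption $c(d+s)/k<1$ is not needed for this inequality, but it keeps $\hat T/\hat R$ compatible with the scaling of $\theta$ used throughout the paper. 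The main subtlety is the interplay between the two upper bounds, since the inequality on the edge $T=\hat T$ relies on $R\leq\hat R$; a clean way to bypass any apparent circularity is to invoke the Nagumo viability theorem on the rectangle as a whole, which only requires the pointwise subtangential check on each of the four sides performed above.
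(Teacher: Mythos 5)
Your proposal is correct and follows essentially the same route as the paper: nonnegativity from the structure of the vector field on the axes, followed by a check that the field points inward (or is tangent) on the sides $R=\hat R$ and $T=\hat T$ of the rectangle, with the choice of $\hat T$ in \eqref{eq:T_hat} making the $T=\hat T$ inequality sharp. Your packaging via the integrating-factor and Duhamel representations, and the explicit appeal to Nagumo's subtangential condition to dispel the apparent circularity between the two upper bounds, is a slightly more rigorous rendering of the same argument (only cosmetic quibble: the integration variable $s$ in your Duhamel formula clashes with the extra-mortality parameter $s$).
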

\begin{proof}
In order to show the result, we start by investigating the dynamics of Eq.~\eqref{eq:mainsyshom}--\eqref{eq:IChom} in phase space on both axes. We observe that:
\begin{itemize}
\item[-] the $T$-axis is a trajectory along which solutions decay to the origin: considering $R=0$ in Eq.~\eqref{eq:mainsyshom}, in fact, leads to $\dot{R}=0$ and $\dot{T}=-kT<0$;
\item[-] on the $R$-axis, the dynamics of $T$ are monotonically increasing: 
for $T=0$, in fact, Eq.~\eqref{eq:mainsyshom} implies then $\dot{T}=cdR>0$. As for $R$, we observe that on the $R$ axis its evolution reads $\dot{R}=gR\left(1-\dfrac{R}{\hat{R}}\right)-dR$. Then, in view of the assumption $g>d$ illustrated in the model description, we introduce the threshold
\begin{equation} \label{eq:Rtilde}
    \tilde{R}:=\frac{(g-d)}{g} \hat{R}
\end{equation}
such that $\dot{R}>0$ if $0<R<\tilde{R}$ and $\dot{R}<0$ if $R > \tilde{R}$.
\end{itemize}
Therefore, trajectories starting with positive initial conditions remain positive for all times.

We now consider the function $\theta$ as in Eq.~\eqref{eq:pqt}. We can thus derive that the set $[0, \hat{R}] \times [0, \hat{T}]$ is invariant from the following observations. First, setting $R=\hat{R}$ in Eq.~\eqref{eq:mainsyshom} leads to $\dot{R}=-\left(d+s\theta(T)\right)\hat{R}<0$. Therefore, if the initial condition satisfies $R^0<\hat{R}$, then $R(t)<\hat{R}$ for all $t>0$.  As for $T$, when $T=\hat{T}$ we have $\dot{T}=c(d+s)R-k \hat{T}$. Consequently, in view of the fact that $R(t) < \hat{R}$ $\forall t > 0$, we obtain that
\[
\dot{T}=c(d+s)R-k \hat{T} < c(d+s)\hat{R}-k \hat{T} \leq 0
\]
as long as the critical threshold $\hat{T}$ satisfies
    $\hat{T}\geq c(d+s)\hat{R}/{k} $.
We can thus fix the value of $\hat{T}$ as the infimum of the toxicity thresholds for which this condition holds, i.e.,
\begin{equation}
    \hat{T}:=\dfrac{c(d+s)}{k}\hat{R}, \quad \textnormal{ assuming } \quad \dfrac{c(d+s)}{k}<1.
\label{eq:T_hat}
\end{equation}
This assumption is posed to ensure the biologically feasible condition that the toxicity density never exceeds the biomass density.
Therefore, if the initial condition is such that $T^0 < \hat{T}$ for $\hat{T}$ as in Eq.~\eqref{eq:T_hat}, then $T(t) \leq \hat{T}$ for all $t>0$. This finally proves the result.
\end{proof}

\begin{remark}
    The result on the positivity of the trajectories holds in general with a function $\theta$ given in terms of the transition rates given in Eq.~\eqref{eq:theta_pq} and \eqref{eq:properties_pq}.
\end{remark}

\begin{remark}
    The quantity $\tilde{R}$ in Eq.~\eqref{eq:Rtilde} can thus be considered as the actual carrying capacity for $R$.
\end{remark}


We now move to the investigation of the steady-states of the homogeneous model \eqref{eq:mainsyshom}. The following result holds.

\begin{prop}[Steady-states] \label{prop:equi}
For the parameter assumptions illustrated in Sec.~\ref{sec:model} and the function $\theta$ as in Eq.~\eqref{eq:pqt}, system \eqref{eq:mainsyshom} always admits the trivial equilibrium $E_0=(0,0)$ as well as a coexistence equilibrium $E_*=(R_*, T_*)$ whose components satisfy
\begin{equation} \label{eq:Tast}
    T_*=\dfrac{cdR_*}{k\hat{T}-csR_*}\hat{T},
\end{equation}
where $R_*$ is the positive solution to
\begin{equation} \label{eq:Rasteq}
(gs+\gamma d)R_*^2
 -\left( 2gs+\gamma d +gd  \right)\hat{R}R_*
+(g-d)(d+s)\hat{R}^2=0.
\end{equation}
\end{prop}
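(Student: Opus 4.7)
The plan is to set $\dot R = \dot T = 0$ in system~\eqref{eq:mainsyshom} and analyse the resulting algebraic system. Factoring $R$ out of the first equation yields two cases: if $R=0$, the second equation reduces to $kT=0$, hence $T=0$, producing the trivial equilibrium $E_0=(0,0)$. Otherwise, one is left with the surviving factor
\benn
\left(g-\gamma\theta(T)\right)\left(1-\frac{R}{\hat R}\right)=d+s\theta(T),
\eenn
together with the steady toxicity balance $c(d+s\theta(T))R=kT$.

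For the nontrivial branch I would first use the steady toxicity equation to express $T$ in terms of $R$. Working inside the biologically invariant box provided by Proposition~\ref{prop:pos}, one has $T\leq\hat T$ and therefore $\theta(T)=T/\hat T$ by~\eqref{eq:pqt}. Solving $c(d+sT/\hat T)R=kT$ linearly for $T$ yields exactly formula~\eqref{eq:Tast}; note that the denominator $k\hat T - csR$ is positive whenever $R\leq\hat R$, since $k\hat T=c(d+s)\hat R$ by~\eqref{eq:T_hat}. Substituting this expression back into the bracket above, clearing the denominator, and using $k\hat T=c(d+s)\hat R$ to simplify, leads, after expansion, to the quadratic~\eqref{eq:Rasteq}.

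The main step is then to show that this quadratic admits a positive root in the physical range $(0,\hat R)$. Denoting
\benn
P(R):=(gs+\gamma d)R^2 - (2gs+\gamma d+gd)\,\hat R\, R + (g-d)(d+s)\hat R^2,
\eenn
a direct evaluation gives $P(0)=(g-d)(d+s)\hat R^2 > 0$ (thanks to the standing assumption $g>d$ from Section~\ref{sec:model}) and $P(\hat R)=-d(d+s)\hat R^2 < 0$. By the intermediate value theorem, $P$ admits at least one root $R_*\in(0,\hat R)$; since the leading coefficient and, by Vieta, the product of the roots are positive, the second real root must lie in $(\hat R,+\infty)$, so $R_*$ is the unique positive solution in the invariant region. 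Feeding $R_*$ into~\eqref{eq:Tast} produces the associated $T_*$, which is positive and, a posteriori, satisfies $T_*<\hat T$: this inequality reduces to $c(d+s)R_*<k\hat T$, i.e.\ $R_*<\hat R$, so the working assumption $\theta(T_*)=T_*/\hat T$ used to derive~\eqref{eq:Rasteq} is self-consistent.

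I expect the main obstacle to be less the root counting itself than the bookkeeping in the algebraic reduction of the first steady equation to the form~\eqref{eq:Rasteq}, together with the final self-consistency check that the selected root falls in the region where $\theta(T)=T/\hat T$. The boundary-sign argument for $P$ together with Vieta's formulas provides a clean and coordinate-free way to bypass a discriminant calculation, which would otherwise be the messiest part of the proof.
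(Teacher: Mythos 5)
Your derivation is correct and reaches the same algebraic reduction as the paper (solve the steady toxicity balance for $T$ to get~\eqref{eq:Tast}, substitute into the surviving factor of the $R$-equation, and use $k\hat T=c(d+s)\hat R$ to arrive at~\eqref{eq:Rasteq}), but you handle the key root-selection step differently. The paper writes down the explicit quadratic formula for $R_*$, asserts that the discriminant $(2gs+\gamma d+gd)^2-4(gs+\gamma d)(g-d)(d+s)$ is nonnegative under the standing assumptions, observes that the larger root yields $T_*^+>\hat T$ (hence falls outside the regime $\theta(T)=T/\hat T$), and keeps the smaller root $R_*^-$. Your argument replaces all of this with the endpoint sign check $P(0)=(g-d)(d+s)\hat R^2>0$, $P(\hat R)=-d(d+s)\hat R^2<0$, the intermediate value theorem, and Vieta's formulas to locate one root in $(0,\hat R)$ and the other in $(\hat R,+\infty)$; the self-consistency check $T_*<\hat T\iff R_*<\hat R$ then closes the loop. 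This is cleaner: it proves as a by-product that the discriminant is strictly positive (two distinct real roots), which the paper only asserts, and it avoids the case analysis on $\theta$. The one corner your Vieta step does not cover is the degenerate case $s=\gamma=0$ (relevant to the paper's Eq.~\eqref{eq:condTursgam0}), where the leading coefficient $gs+\gamma d$ vanishes and~\eqref{eq:Rasteq} becomes linear; your IVT argument still gives the unique root $R_*=\tilde R=(g-d)\hat R/g$ there, so this is a one-line remark rather than a gap, but it is worth stating since the paper's explicit root formula also breaks down at that point.
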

\begin{proof}
The steady-states associated with Eq.~\eqref{eq:mainsyshom} are the solutions to
\begin{subequations} \label{eq:mainsyshomeq}
\begin{align}
0 &= 
\left(g-\gamma \theta(T)\right)R\left(1-\dfrac{R}{\hat{R}}\right)
-\left(d+s\theta(T)\right)R,\label{eq:mshomeqR} \\[0.3cm] 
0 &= c\left(d+s\theta(T)\right)R-kT.  \label{eq:mshomeqT}
\end{align}
\end{subequations}
This system admits the trivial solution $E_0=(0,0)$ for all parameter values. Additional coexistence steady-states can be found by plugging into Eq.~\eqref{eq:mainsyshomeq} the expression for the function $\theta$ in \eqref{eq:theta_pq}. The case $T>T_*$ leads to no feasible equilibria, while we can find a coexistence equilibrium state when $T\geq T_*$ by first solving Eq.~\eqref{eq:mshomeqT} for $T$ (which leads to Eq.~\eqref{eq:Tast}) and then plugging this expression into Eq.~\eqref{eq:mshomeqR}. The resulting quadratic equation, shown in \eqref{eq:Rasteq}, admits two positive solutions which we correspondingly indicate with $R_*^-$ and $R_*^+$. In particular, it turns out that $T_*^+>\hat{T}$, and therefore not admissible in this case. On the other hand, we can prove that $0<R_*^-<\hat{R}$ and $0<T_*^-<\hat{T}$, i.e.~this steady-state lies in the rectangular invariant region defined in Prop.~\ref{prop:pos}. Therefore, the only biologically feasible coexistence equilibrium is $E_*^-=(R_*^-, T_*^-)$. In particular, thanks to the assumption in Eq.~\eqref{eq:T_hat}, we also have $T_*^-<R_*^-$. We can then define the coexistence state $E_*=E_*^-$, namely $E_*=(R_*, T_*)$ where
$$R_*=\dfrac{( 2gs+\gamma d +gd )-\sqrt{( 2gs+\gamma d +gd)^2-4(gs+\gamma d)(g-d)(d+s)}}{2(gs+\gamma d)}\hat{R},$$
and $T_*$ is given by Eq.~\eqref{eq:Tast}. It can also readily be shown that $R_*\leq \hat{R}$, and that the discriminant of equation~\eqref{eq:Rasteq} is nonnegative for all parameter values fulfilling the assumptions in Section~\ref{sec:model} (namely $g>d$, $g>\gamma$, $\hat{T}=c(d+s)\hat{R}/k$, and $c(d+s)/k<1)$, therefore $R_*\geq 0$. The first condition also implies that $0\leq T_*\leq \hat{T}$ for all parameter values.
\end{proof}



\begin{remark}
It can be shown that $R_*$ is monotonically decreasing with respect to both $s$ and $\gamma$. This is consistent with biological expectations: the higher the root sensitivity to autotoxicity, respectively the stronger the inhibition induced by autotoxicity, the less the biomass can grow.
\end{remark}

To verify the possible emergence of Turing patterns, we now investigate the linear stability analysis to homogeneous perturbations of the steady-states obtained in Prop.~\ref{prop:equi}.

\begin{prop} \label{prop:stabhom}
    The equilibrium $E_0$ of Eq.~\eqref{eq:mainsyshom} is unstable, whereas $E_*$ is asymptotically stable for the parameter values supporting its existence.
\end{prop}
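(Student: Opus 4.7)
The plan is to establish both claims through the Jacobian matrix $J$ of the right-hand side of Eq.~\eqref{eq:mainsyshom}, invoking the two-dimensional Routh--Hurwitz criterion: asymptotic stability is equivalent to $\operatorname{tr}(J)<0$ together with $\det(J)>0$, whereas instability follows from the existence of an eigenvalue with positive real part.

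For $E_0$ the computation is immediate. Since $\theta(0)=0$ and every occurrence of $\theta(T)$ in the $R$-equation is multiplied by $R$, the off-diagonal entry $\partial_T f_1$ vanishes at $(0,0)$, leaving a lower-triangular Jacobian with diagonal entries $g-d$ and $-k$. The standing assumption $g>d$ produces the positive eigenvalue $g-d$, so $E_0$ is a saddle, hence unstable.

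For $E_*$ I would exploit the equilibrium identities $(g-\gamma\theta_*)(1-R_*/\hat{R})=d+s\theta_*$ from Eq.~\eqref{eq:mshomeqR} and $c(d+s\theta_*)R_*=kT_*$ from Eq.~\eqref{eq:mshomeqT} to simplify the partial derivatives, writing $\theta_*:=\theta(T_*)$ and observing that $\theta_*'=1/\hat{T}$ since $T_*<\hat{T}$ by Prop.~\ref{prop:equi}. A short computation yields
\[
J_{11}=-\frac{(g-\gamma\theta_*)R_*}{\hat{R}},\qquad J_{12}=-\theta_*'R_*\bigl[\gamma(1-R_*/\hat{R})+s\bigr],
\]
\[
J_{21}=c(d+s\theta_*),\qquad J_{22}=cs\theta_*'R_*-k.
\]
The bounds $g>\gamma\geq\gamma\theta_*$, $R_*<\hat{R}$ from Prop.~\ref{prop:equi}, and the non-negativity of $\gamma,s$ give at once $J_{11}<0$, $J_{12}\leq 0$, and $J_{21}>0$. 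Once $J_{22}<0$ is established, $\operatorname{tr}(J)<0$ is then immediate and $\det(J)=J_{11}J_{22}-J_{12}J_{21}$ is a sum of two non-negative contributions with the first strictly positive, hence positive.

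The remaining step, which is the only one whose sign is not self-evident, is $J_{22}<0$. Substituting the explicit choice $\hat{T}=c(d+s)\hat{R}/k$ from Eq.~\eqref{eq:T_hat} into $cs\theta_*'R_*=csR_*/\hat{T}$ gives
\[
J_{22}=\frac{k\bigl[sR_*-(d+s)\hat{R}\bigr]}{(d+s)\hat{R}},
\]
which is strictly negative since $R_*<\hat{R}$ and $s<d+s$. This concludes the argument. The non-trivial part of the proof is essentially algebraic bookkeeping: the stabilising negativity of $J_{22}$ is secured only by combining \emph{both} equilibrium relations with the explicit threshold $\hat{T}$ fixed in Eq.~\eqref{eq:T_hat}, so the invariant-region construction in Prop.~\ref{prop:pos} plays a structural role in the stability result and not merely in the existence argument.
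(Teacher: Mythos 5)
Your proof is correct and follows essentially the same route as the paper: linearisation, sign analysis of the Jacobian entries at $E_0$ and $E_*$, and the trace--determinant (Routh--Hurwitz) criterion. You simply make explicit the algebra that the paper leaves implicit (using the equilibrium identities and the value of $\hat{T}$ from Eq.~\eqref{eq:T_hat} to verify $J_{11}^*,J_{22}^*<0$ and $J_{12}^*\leq 0$), which is a welcome elaboration rather than a different method.
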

\begin{proof}
The Jacobian matrix associated to system \eqref{eq:mainsyshom} reads
\begin{equation}
J(R,T)=
\begin{pmatrix} 
J_{11} & J_{12} \\ J_{21} & J_{22} \end{pmatrix},
\end{equation}
where
\begin{align*}
    J_{11}&=\left(g-\gamma\theta(T)\right)\left(1-\dfrac{2R}{\hat{R}}\right)-\left(d+s\theta(T)\right),\\
    J_{12}&=-\gamma R\, \theta'(T) \left(1-\dfrac{R}{\hat{R}}\right)-sR\, \theta'(T),\\
    J_{21}&=c\left(d+s\theta(T)\right),\\
    J_{22}&=cs R\, \theta'(T)-k.
\end{align*}
In $E_0$, the eigenvalues of the Jacobian matrix $J(0,0)$ are given by $\lambda_1=g-d$, $\lambda_2=-k$. Therefore, due to the assumption $g>d$, we have $\lambda_1>0$, $\lambda_2 < 0$, i.e.~this equilibrium is linearly unstable. At the coexistence steady-state $E_*$, on the other hand, we have $J_{11}^*,\,J_{12}^*,\,J_{22}^*<0$, whereas $J_{21}^*>0$. This implies that $J(R_*,T_*)$ admits a negative trace and a positive determinant for all parameter values satisfying the assumptions in Sec.~\ref{sec:model}; consequently, it is asymptotically stable.
\end{proof}


\subsection{Heterogeneous model}
We now turn our attention to the full model \eqref{eq:mainsys}. In particular, we investigate the stability of the coexistence steady-state $E_*$ to spatially heterogeneous perturbations. As known, in fact, Turing patterns emerge when an equilibrium that is linearly stable to homogeneous perturbations becomes unstable in the presence of spatially heterogeneous disturbances. Being $E_0$ unstable with respect to homogeneous perturbations (as shown in Prop.~\ref{prop:stabhom}), the trivial steady-state therefore cannot lead to Turing patterns. Moreover, other types of stable spatial patterns cannot emerge from $E_0$ since oscillations around bare soil would lead to unfeasible scenarios. Consequently, the trivial steady-state is excluded from our following analysis.\\
The following result holds.
\begin{prop} \label{prop:stabcross}
    The necessary condition for the instability of the coexistence steady-state ${E_*=(R_*,T_*)}$ with respect to heterogeneous perturbations is
    \begin{equation} \label{eq:condTur}
    \sigma > \sigma_L:=
    \left(\dfrac{k\hat{T}-csR_*}{cdR_*+kT_*}\right)d_R 
    + \dfrac{R_*}{\hat{R}}\left(\dfrac{d\hat{T}+sT_*}{(1-R_*/\hat{R})(cdR_*+kT_*)} \right)d_T.
\end{equation}
\end{prop}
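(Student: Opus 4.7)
The plan is to derive the dispersion relation for the linearisation of~\eqref{eq:mainsys} about the homogeneous state $E_*=(R_*,T_*)$ and then read off the necessary condition under which at least one spatial Fourier mode destabilises. First I would set $R=R_*+u$, $T=T_*+v$ and, using $\nabla R_*=\nabla T_*=0$, linearise the cross-diffusion term to
\[
\Delta\bigl((d_R-\sigma\theta(T))R\bigr)\ \approx\ (d_R-\sigma\theta(T_*))\,\Delta u-\sigma\,\theta'(T_*)\,R_*\,\Delta v,
\]
so that the linearised problem reads $\partial_t\mathbf{w}=D\,\Delta\mathbf{w}+J^*\mathbf{w}$, with $J^*$ the reaction Jacobian already computed in the proof of Prop.~\ref{prop:stabhom} and
\[
D=\begin{pmatrix}d_R-\sigma\theta(T_*) & -\sigma\,\theta'(T_*)\,R_* \\ 0 & d_T\end{pmatrix}.
\]
Projecting onto the Neumann eigenfunctions of $-\Delta$ on $\Omega$ (eigenvalues $k^{2}\ge 0$) and inserting the ansatz $\mathbf{w}\sim e^{\lambda t}\phi_k(x)$, the growth rate $\lambda$ obeys $\lambda^{2}-\tau(k^{2})\lambda+h(k^{2})=0$ with $\tau(k^{2})=\tr(J^{*})-k^{2}\tr(D)$ and $h(k^{2})=\det(J^{*}-k^{2}D)$.

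Next I would exploit Prop.~\ref{prop:stabhom}, which gives $\tr(J^{*})<0$, together with $\sigma<d_R$ (so $\tr(D)>0$), to conclude that $\tau(k^{2})<0$ for all $k^{2}\ge 0$. Consequently, a Turing-type instability can arise only through $h(k^{2})<0$ at some $k^{2}>0$. Expanding the determinant yields a parabola in $k^{2}$,
\[
h(k^{2})=A\,k^{4}-B\,k^{2}+C,
\]
with $A=(d_R-\sigma\theta(T_*))\,d_T>0$, $C=\det J^{*}>0$ (from the asymptotic stability of $E_{*}$), and
\[
B=J_{11}^{*}d_T+J_{22}^{*}(d_R-\sigma\theta(T_*))+J_{21}^{*}\sigma\,\theta'(T_*)\,R_*.
\]
Since $A,C>0$, such a parabola can dip below zero on $\{k^{2}>0\}$ only if $B>0$; this is the necessary condition for instability. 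Solving $B>0$ for $\sigma$, and using that the bracket multiplying $\sigma$ is positive (because $J_{22}^{*}<0$, $J_{21}^{*}>0$ and $\theta,\theta'\ge 0$), I obtain
\[
\sigma\ >\ \frac{-J_{11}^{*}d_T-J_{22}^{*}d_R}{J_{21}^{*}\theta'(T_*)R_*-J_{22}^{*}\theta(T_*)}.
\]

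The final task is to rewrite this bound in the explicit form of the proposition. I would substitute $\theta(T_*)=T_*/\hat T$, $\theta'(T_*)=1/\hat T$ from~\eqref{eq:pqt}, together with the closed-form entries
\[
J_{11}^{*}=-\frac{(d\hat T+sT_*)R_*}{\hat T\,\hat R\,(1-R_*/\hat R)},\qquad J_{21}^{*}=\frac{kT_*}{R_*},\qquad J_{22}^{*}=\frac{csR_*-k\hat T}{\hat T},
\]
which follow from the two equilibrium identities $(g-\gamma\theta(T_*))(1-R_*/\hat R)=d+s\theta(T_*)$ (from $\dot R=0$) and $c(d+s\theta(T_*))R_*=kT_*$, i.e.~$cdR_*\hat T+csR_*T_*=kT_*\hat T$ (from $\dot T=0$). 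Using the latter identity, the denominator collapses to $(cdR_*+kT_*)/\hat T$, and the numerator splits naturally into a $d_R$- and a $d_T$-piece that reproduce the two summands of $\sigma_L$.

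I expect this final algebraic collapse to be the main obstacle: a direct substitution gives a denominator of the form $T_*(2k\hat T-csR_*)/\hat T^{2}$, and recognising that the $\dot T=0$ relation lets one rewrite $2k\hat T-csR_*$ as $\hat T(cdR_*+kT_*)/T_*$ is the only non-routine step. Everything else — the linearisation, the dispersion relation, and the sign analysis $\tau<0 \Rightarrow$ $B>0$ necessary — is standard Turing bookkeeping that rests on the stability and sign information already established in Prop.~\ref{prop:stabhom}.
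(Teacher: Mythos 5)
Your proposal is correct and follows essentially the same route as the paper: linearise about $E_*$ with the diffusion matrix $D=J_\Delta(R_*,T_*)$, observe that the trace of the characteristic matrix stays negative (using $\sigma<d_R$ and $\tr J^*<0$), and extract the necessary condition from the sign of the $k^2$-coefficient of the determinant, namely $J_{11}^*d_T+J_{22}^*(d_R-\sigma\theta(T_*))+J_{21}^*\sigma\theta'(T_*)R_*>0$. The closed-form entries you give for $J_{11}^*$, $J_{21}^*$, $J_{22}^*$ and the collapse of the denominator to $(cdR_*+kT_*)/\hat T$ via the identity $cdR_*\hat T+csR_*T_*=kT_*\hat T$ all check out, and in fact supply the algebraic detail the paper's proof leaves implicit in the phrase ``expressing all the Jacobian components explicitly.''
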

\begin{proof}
The characteristic matrix $M_\kappa$ related to the $\kappa$-th eigenvalue of the Laplace operator with homogeneous Neumann boundary conditions, associated to system \eqref{eq:mainsys}, and evaluated at $E_*$ can be written as
\begin{equation}
    M_\kappa = J(R_*,T_*) -\lambda_\kappa J_\Delta(R_*,T_*) = \begin{pmatrix} 
J_{11}^*-\lambda_\kappa J^{\Delta*}_{11} & J_{12}^*-\lambda_\kappa J^{\Delta*}_{12} \\[0.3cm] 
J_{21}^* & J_{22}^*-\lambda_\kappa d_T
\end{pmatrix},
\end{equation}
where $J(R_*,T_*)$ is the Jacobian matrix for the homogeneous system and $J_\Delta(R_*,T_*)$ is the linearised diffusion matrix (both evaluated at $E_*$). In particular, the latter is defined as
\begin{equation*}
J_\Delta(R_*,T_*)=
\begin{pmatrix} 
J^{\Delta*}_{11} & J^{\Delta*}_{12} \\[0.2cm]
J^{\Delta*}_{21} & J^{\Delta*}_{22} 
\end{pmatrix}=
\begin{pmatrix} 
d_R-\sigma \theta(T_*) & -\sigma R_* \theta'(T_*)\\[0.3cm] 
0 & d_T
\end{pmatrix}.
\end{equation*}
For this matrix we have $J^{\Delta*}_{11},\,J^{\Delta*}_{22}>0$ and $J^{\Delta*}_{12}<0$.\\
In view of the assumption $\sigma<d_R$ and the consideration that $T_*<\hat{T}$, the trace of $M_\kappa$ remains negative, since
\[
\textnormal{tr}M_\kappa=\textnormal{tr}J(R_*,T_*)-\lambda_\kappa\left(d_T+d_R-\sigma \theta(T_*)\right)<0.
\]
On the other hand, the determinant of $M_\kappa$ reads
\[
\det M_\kappa=
d_T J^{\Delta*}_{11} \lambda_\kappa^2
-\left( d_T J^{*}_{11}+ J^{\Delta*}_{11}J^{*}_{22}
-J^{\Delta *}_{12}J^{*}_{21}\right) \lambda_\kappa
+\det J(R_*,T_*).
\]
Since $d_T>0$ and $\det (R_*,T_*)>0$ (due to Prop.~\ref{prop:stabhom}), in order for $E_*$ to become unstable -- thus leading to Turing instability -- the following necessary condition must hold
\[
d_T J^{*}_{11}+ J^{\Delta*}_{11}J^{*}_{22} -J^{\Delta *}_{12}J^{*}_{21}>0.
\]
Expressing all the Jacobian components explicitly in terms of $R_*$, $T_*$, we obtain Eq.~\eqref{eq:condTur}.
\end{proof}

\begin{remark}
    A particularly interesting scenario from the biological viewpoint (which potentially satidfies~\eqref{eq:condTur}) is given by $d_T \ll 1$ and $\sigma \sim d_R$; this in fact corresponds to a low propagation rate for the exposed roots and low diffusion for the toxicity.
\end{remark}

\begin{remark}
    In Sec.~\ref{sec:numcont}, we will numerically investigate the monotonicity of the critical threshold $\sigma_L$ depending on $s$ and $\gamma$ (and suitably fixing all other parameter values). In particular, we aim to show how growth-inhibition and extra-mortality affect the range $(\sigma_L, d_R)$ of $\sigma$-values supporting the emergence of Turing patterns.
\end{remark}

The most relevant consequence of Prop.~\ref{prop:stabcross} is that when $\sigma=0$ (namely, no propagation-reduction effects are considered) the condition in Eq.~\eqref{eq:condTur} does not hold and Turing patterns cannot emerge (the reaction part does not present the activator--inhibitor structure so that no Turing instability can appear with standard diffusion terms). In other words, cross-diffusion in system~\eqref{eq:mainsys} is crucial to retrieve Turing patterns in a model coupling only biomass and toxicity dynamics (without an explicit equation for the water variable).



We finally observe that when both $s$ and $\gamma$ vanish (i.e.~in the scenario where autotoxicity only acts on the propagation mechanism whereas both extra-mortality and growth-inhibition effects are considered negligible), condition \eqref{eq:condTur} reduces to
\begin{equation} \label{eq:condTursgam0}
\sigma >\sigma_L^0:=\frac{g}{2} \left(\frac{d_R}{g-d}+\frac{d_T}{k}\right).
\end{equation}




\section{Numerical investigation}\label{sec:sim}

In this section, we carry out a thorough numerical investigation of Eq.~\eqref{eq:mainsys}--\eqref{eq:BICfr}, focusing our attention on the influence of the autotoxicity effects (growth-inhibition, extra-mortality, and propagation-reduction) on the biomass dynamics. Our strategy is twofold: on the one hand, we perform direct simulations of the model for both one- and two-dimensional domains for different values of $\gamma$, $s$, and $\sigma$ fulfilling the parameters assumptions. On the other hand, we adopt the continuation software \texttt{pde2path}~\cite{pde2path} to compute the bifurcation diagrams in the 1D case with respect to the bifurcation parameters~$s$ and $\sigma$, revealing the complex structure and variety of patterns supported by our cross-diffusion system.\\
As our main aim is to study the toxicity parameters $\gamma$, $s$, and $\sigma$, we fix the other parameter values (unless stated otherwise) as follows
\begin{equation} \label{eq:parval}
\begin{gathered}
 g = 10, \quad c = 0.5, \quad \hat{R} = 6, \quad d = 1, \quad k = 1, \quad \hat{T} = 3(1+s), \quad d_R = 3.33, \quad d_T= 0.05,
\end{gathered}
\end{equation}
where the units of measure can be found in Tab.~\ref{tab:pars}.
In particular, we note that the precise value of~$\hat{T}$ depends on $s$, in order to fulfill the parameter assumptions outlined in Sec.~\ref{sec:model}--\ref{sec:TI}. \\
Even though our theoretical investigation considers a general rather than a specific plant species, the units and parameter values used in this work are in line with available data in the literature (see e.g.~\cite{De_Baets_2007,martinez1998belowground}). We also remark that the values of $g$, $c$, $\hat{R}$, $d$, and $k$ are in agreement with \cite{Carteni_2012}. 

\subsection{Numerical simulations}

In this section, we perform direct simulations of Eq.~\eqref{eq:mainsys}--\eqref{eq:BICfr} for $\Omega \subset \mathbb{R}^n$ with $n=1,2$. In particular, we choose
\begin{equation}
\begin{aligned}
    \Omega &= [0,L] \qquad &&\text{ for } n=1, \\
    \Omega &= [0,L] \times [0,L]  \qquad &&\text{ for } n=2,
\end{aligned}
\end{equation}
with length $L=8$ metres for $t \in [0, t_{\rm fin}]$ with $t_{\rm fin}=1000$ years. 

In particular, we compare the one- and two-dimensional profiles obtained at $t_{\rm fin}$ for the biomass $R$ and the toxicity $T$ for four different scenarios: three, where at least one of the three effects induced by toxicity -- namely, growth-inhibition, extra-mortality, and propagation-reduction -- is neglected, and one where all effects act simultaneously. As these effects are analytically represented by the parameters $\gamma$, $s$, and $\sigma$ respectively, we focus on the following parameter sets (the other parameter values are fixed as in Eq.~\eqref{eq:parval}):
\begin{enumerate}[(i)]
    \item\label{scenario:i} no propagation-reduction effect: $\gamma=0.1$, $s=0.5$, $\sigma=0$,
    \item\label{scenario:ii} all effects: $\gamma=0.1$, $s=0.5$, $\sigma=3$,
    \item\label{scenario:iii} no growth-inhibition effect: $\gamma=0$, $s=0.5$, $\sigma=3$, 
    \item\label{scenario:iv} no extra-mortality effect: $\gamma=0.1$, $s=0$, $\sigma=3$.
\end{enumerate}
In our simulations, we use a numerical scheme based on finite differences in space and forward Euler in time. The spatial domain $\Omega$ is discretised via:
\begin{itemize}
    \item[-] a grid of $400$ elements with spacing $\delta x = 0.02$ in the one-dimensional case,
    \item[-] a lattice of $80 \times 80$ elements with spacing $\delta x = 0.1$ in the two-dimensional case.
\end{itemize}
In both cases, the timestep is chosen as $\delta t=10^{-5}$. \\
For the one-dimensional simulations, the initial condition satisfies
\begin{equation}
    R^0(x) = 10 \, e^{-\frac{25 (L-2 x)^2}{4 L}}, \quad T^0(x) = 0.
\end{equation}
As for the two-dimensional case, the initial condition consists in four squares of four pixels per side randomly placed in the lattice with uniform value of $R$ randomly chosen in the range $1-2 \, \textrm{kg} \, \textrm{m}^{-2}$ and $T$ uniformly zero.




Figure~\ref{fig:comp_RT_1D} shows the steady states of the roots biomass (left) and toxicity (right) obtained in the simulations of Eq.~\eqref{eq:mainsys}--\eqref{eq:BICfr} over the one-dimensional domain $\Omega = [0,L]$ for the four scenarios (i)--(iv) outlined above. Here, scenario~\eqref{scenario:i} is represented by a dotted-dashed line, scenario~\eqref{scenario:ii} by a solid line, scenario~\eqref{scenario:iii} by a dashed line, and scenario~\eqref{scenario:iv} by a dotted line. We note that for this specific set of parameter values, the patterns associated with scenario~\eqref{scenario:ii} and scenario~\eqref{scenario:iv} are identical, leading to overlapping curves. Moreover, as predicted in Prop.~\ref{prop:stabcross}, the parameter $\sigma$ related to the cross-diffusion term in \eqref{eq:mainsys} plays a crucial role in the formation of spatial patterns: only for $\sigma=0$ (scenario~\eqref{scenario:ii}), the simulation returns a homogeneous state, whereas in all other cases (where $\sigma \neq 0$) a nontrivial pattern arises -- even in scenario~\eqref{scenario:i}, where both $\gamma$ and $s$ are turned off. 
The biomass and autotoxicity patterns are in phase with respect to the pattern structure; however, the toxicity peaks are located in correspondence with the centre of the biomass gaps. This is due to the combination of the healthy and exposed root dynamics, whose local maxima are in- and out-of-phase, respectively.
Finally, the biomass--water interactions described by our cross-diffusion model lead to the formation of double-peak pulses or 2-front vegetation spots (also observed in a reduced version of the biomass--water model by Gilad et al., see \cite{Jaibi_2020}). 

\begin{figure}[H]
    \centering
    \begin{subfigure}[t]{0.5\textwidth}
    \centering
    \begin{overpic}[scale=0.5]{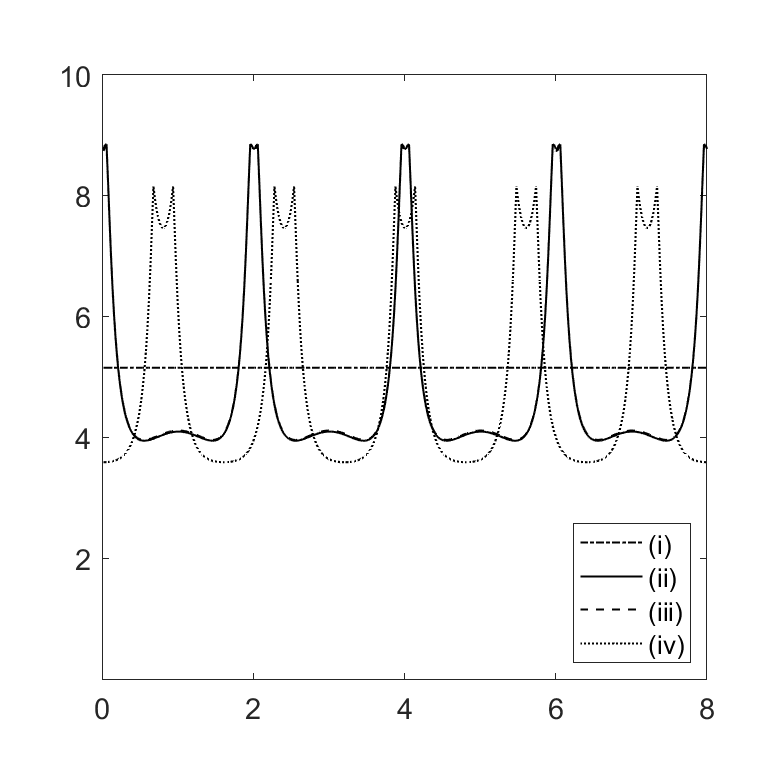}
    \put(2,50){$R$}
    \put(50,2){$x$}
    \end{overpic} 
    \caption{}
    \end{subfigure}
    \hspace{-0.3cm}
    \begin{subfigure}[t]{0.5\textwidth}
    \centering
    \begin{overpic}[scale=0.5]{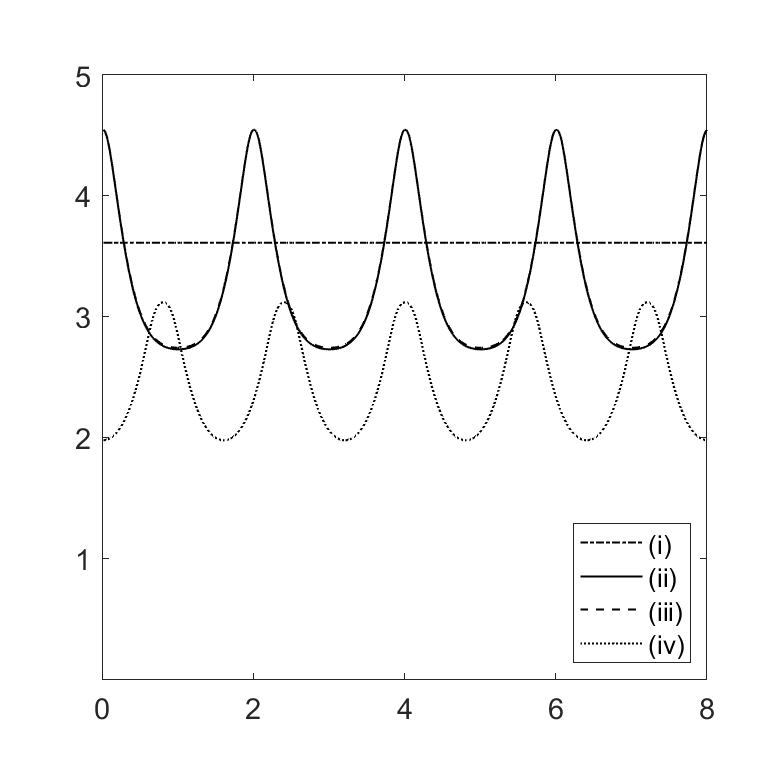}
    \put(2,50){$T$}
    \put(50,2){$x$}
    \end{overpic} 
    \caption{}
    \end{subfigure}
    \caption{Result of simulations of Eq.~\eqref{eq:mainsys}--\eqref{eq:BICfr} on a one-dimensional domain $\Omega$ of length $L=8$ metres at $t=t_{\rm fin}$ for (a) the roots biomass $R$ and (b) the toxicity concentration $T$ for the four different scenarios~\eqref{scenario:i},~\eqref{scenario:ii},~\eqref{scenario:iii},~\eqref{scenario:iv} corresponding to dotted-dashed, solid, dashed, and dotted lines, respectively. Other parameter values are fixed as in Eq.~\eqref{eq:parval}. 
    }
    \label{fig:comp_RT_1D}
\end{figure}



Simulations in the two-dimensional case (closer to real-life scenarios) confirm the convergence of Eq.~\eqref{eq:mainsys}--\eqref{eq:BICfr} to a stable state. When cross-diffusion is present (i.e. for $\sigma \neq 0$), both $R$ and $T$ reach a patterned configuration (see Figure~\ref{fig:pannello2D}(ii)--(iv)), whereas for $\sigma=0$ the configuration is spatially homogeneous (see Figure~\ref{fig:pannello2D}(i)). In particular, in the simulations where patterns emerge, we observe that biomass spots are irregular and equally spaced in a uniform non-zero biomass background, and exhibit a more pronounced border with a small depression in the centre (corresponding to the double-peak pulses in the 1D simulations). Correspondingly, the peaks of the toxicity spots are located at the centre of the biomass spots. \\
The formation of these patterns occurs on a long timescale; in the Supplementary Material, we provide videos which reveal in more detail how the spots rearrange to be equally spaced. The main features of the patterns do not change in scenarios~\eqref{scenario:ii} and~\eqref{scenario:iii}, whereas in scenario~\eqref{scenario:iv} we observe larger, more abundant spots.

\begin{figure}[!ht]
\centering
\vspace{.3cm}
\begin{overpic}[scale=0.7]{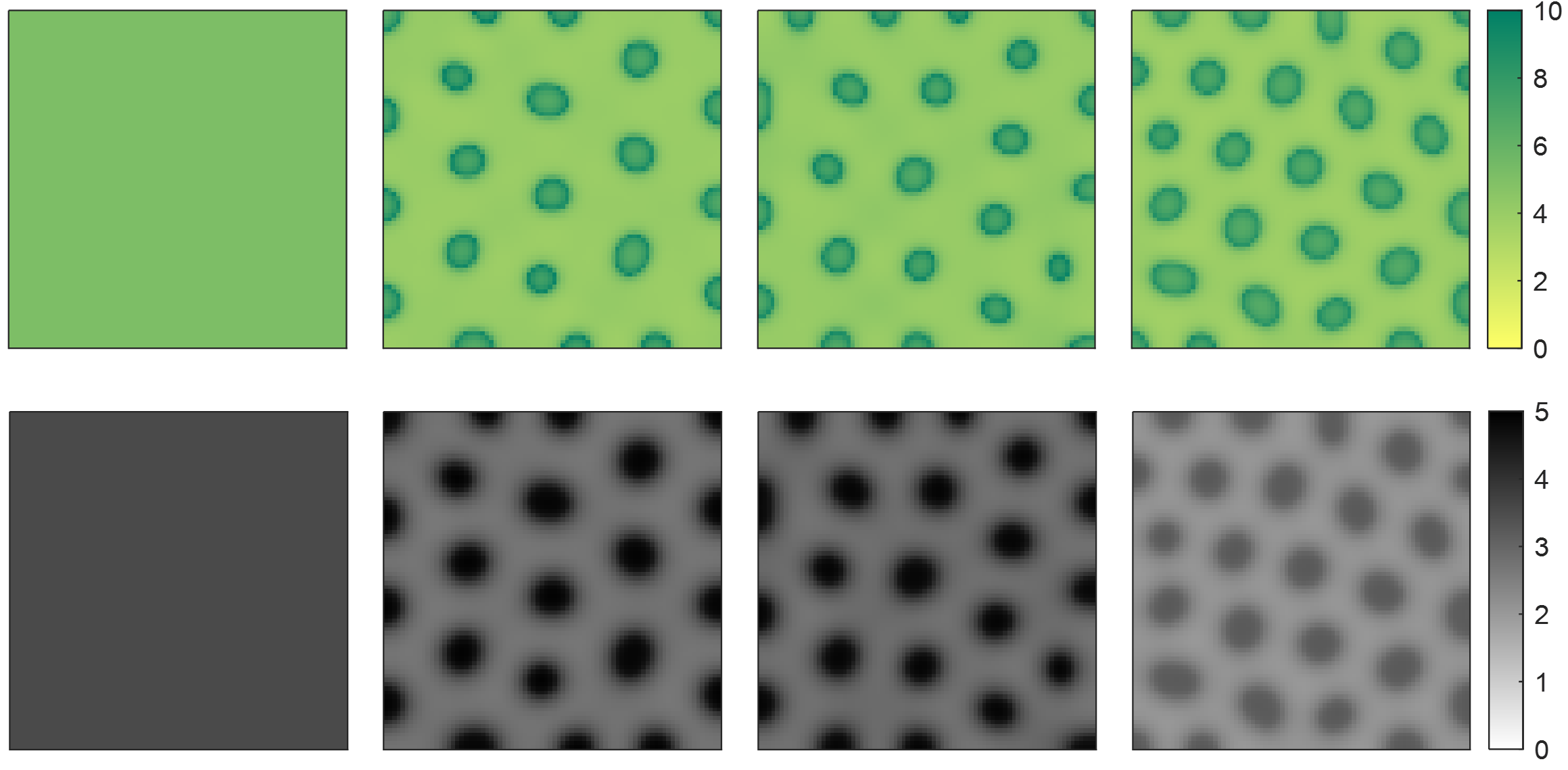}
  	\put(10,50){(i)}
  	\put(33,50){(ii)}
  	\put(56,50){(iii)}
        \put(81,50){(iv)}
        \put(-4,10){$T$}
        \put(-4,37){$R$}
    \end{overpic}
\vspace{.3cm}
    \caption{Result of simulation of Eq.~\eqref{eq:mainsys}--\eqref{eq:BICfr} on a two-dimensional domain squared domain $\Omega$ ($L=8$ metres) at $t=t_{\rm fin}$ for the roots biomass $R$ (upper panel) and toxicity concentration $T$ (lower panel) for the four different scenarios~\eqref{scenario:i},~\eqref{scenario:ii},~\eqref{scenario:iii},~\eqref{scenario:iv} corresponding to the first, second, third, and fourth column, respectively. Other parameter values are fixed as in Eq.~\eqref{eq:parval}.}
    \label{fig:pannello2D}
\end{figure}

\subsection{Bifurcations and numerical continuation} \label{sec:numcont}

In this section, we numerically investigate the effect of crucial parameters on the possibility of having Turing patterns and on the bifurcation structure of steady states. From conditions~\eqref{eq:condTur}, we know that growth-inhibition (represented by the parameter $\gamma$) and extra-mortality (represented by $s$) changes the critical value $\sigma_L$ in \eqref{eq:condTur} describing the infimum of the interval of $\sigma$-values supporting the occurrence of Turing patterns. In particular, we fix all parameter values except for $\gamma$ and $s$ as in Eq.~\eqref{eq:parval} and plot the threshold $\sigma_L$ as a function of these two parameters in Figure~\ref{fig:rhssigma}. We observe that the area of the regions obtained by considering a cross-section $(\bar{\gamma},s,\sigma)$ in $(\gamma,s,\sigma)$-space for a fixed $\bar{\gamma}\in[0,10]$ is monotonically increasing with respect to $\gamma$; this also applied to the area of the regions obtained by considering a cross-section $(\gamma,\bar{s},\sigma)$ in $(\gamma,s,\sigma)$-space for a fixed $\bar{s}\in[0,1]$, except for the (small) range $s \in [0,s^\ast]$ (with $s^\ast\approx 0.16$) where the area is monotonically increasing.
This result suggests that the growth-inhibition and extra-mortality effect play against the propagation-reduction mechanism induced by cross-diffusion in the formation of Turing patterns by reducing the size of the region supporting them for a wide range of parameter values.

\begin{figure}[!ht]
\centering
\begin{overpic}[scale=0.5]{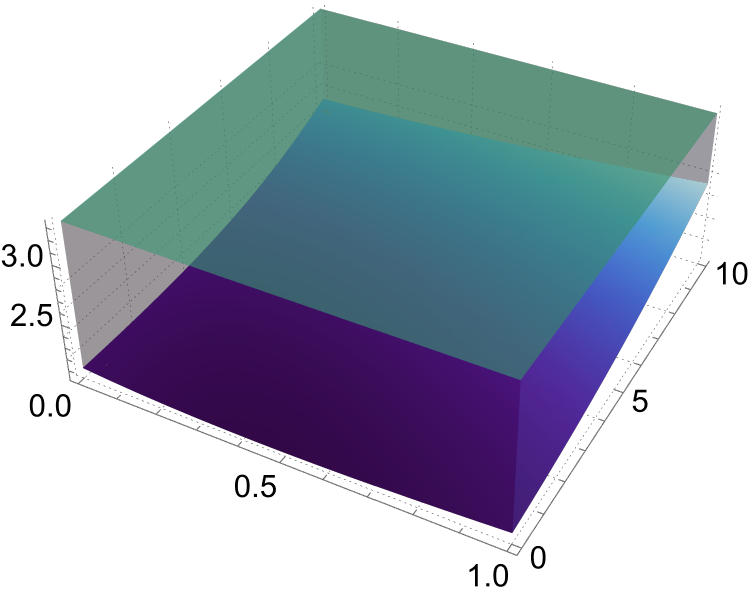}
  	\put(33,5){\large{$s$}}
        \put(81,70){\textcolor{darkcyan}{$d_R$}}
        \put(95,55){\textcolor{blue}{$\sigma_L$}}
        \put(91,25){\large{$\gamma$}}
        \put(0,52){\large{$\sigma$}}
\end{overpic}
\caption{Region of admissible $\sigma$-values as a function of $\gamma$ and $s$ (describing the growth-inhibition and extra-mortality effect, respectively) in $(\gamma,s,\sigma)$-space (all other parameter values are fixed as in Eq.~\eqref{eq:parval}). This grey region is delimited by the two-dimensional surfaces $\sigma=\sigma_L$ (as defined in the Turing condition \eqref{eq:condTur}) and $\sigma=d_R$, in blue and cyan respectively.}
    \label{fig:rhssigma}
\end{figure}

The analytical study and simulations can be complemented by numerical continuation results that show the bifurcation structure of steady states. To this end, we exploit the numerical continuation software \texttt{pde2path} to compute the bifurcation diagrams of stationary solutions with respect to different parameters. How to implement the cross-diffusion system into \texttt{pde2path} is explained in~\cite{CKCS}. In the following bifurcation diagrams, we adopt the standard conventions for the representation of bifurcation branches, namely thin/thick lines represent unstable/stable branches respectively, while dots denote bifurcation points.

Figure~\ref{fig:bif_diag} shows the one-parameter bifurcation diagrams of steady-state solutions in the 1D case with respect to parameters $\sigma$ and $s$ respectively. In particular, Figure~\ref{fig:bif_sigma} refers to scenario~\eqref{scenario:ii}, while Figure~\ref{fig:bif_s} to scenario~\eqref{scenario:iii}. In both cases, as reference quantity on the $y$-axis, we use the $L^1$-norm of $R$ being the total amount of biomass on the domain. We also highlight with a dot the point on the branches corresponding to the steady-state solutions in Figure~\ref{fig:comp_RT_1D}.

In Figure~\ref{fig:bif_sigma}, we can see that the homogeneous state $E_*$ (black line) becomes unstable for sufficiently large values of $\sigma$ and branches of nonhomogeneous solutions arise at the bifurcation points. The second branch (orange) is bifurcating subcritically and then becomes stable at the fold. This branch is the one corresponding to the stable solutions (solid line) in Figure~\ref{fig:comp_RT_1D}. From Figure~\ref{fig:bif_s}, we observe that sufficiently large values for $s$ stabilise the homogeneous state $E_*$ (black line). As in the previous case, the orange branch corresponds to the stable solutions (dashed line) in Figure~\ref{fig:comp_RT_1D}. 

In both cases, since the orange branch is bifurcating subcritically, the system exhibits multistability regions in which both the homogenous solution and the nonhomogenous one are stable. In the Supplementary Material, we provide videos that show how the solution on the orange branches modifies when the bifurcation parameter is varied, and also that the region of double-peaked patterns is not narrow.


\begin{figure}[!h]
    \centering
    \begin{subfigure}[t]{0.5\textwidth}
    \centering
    \begin{overpic}[scale=0.45]{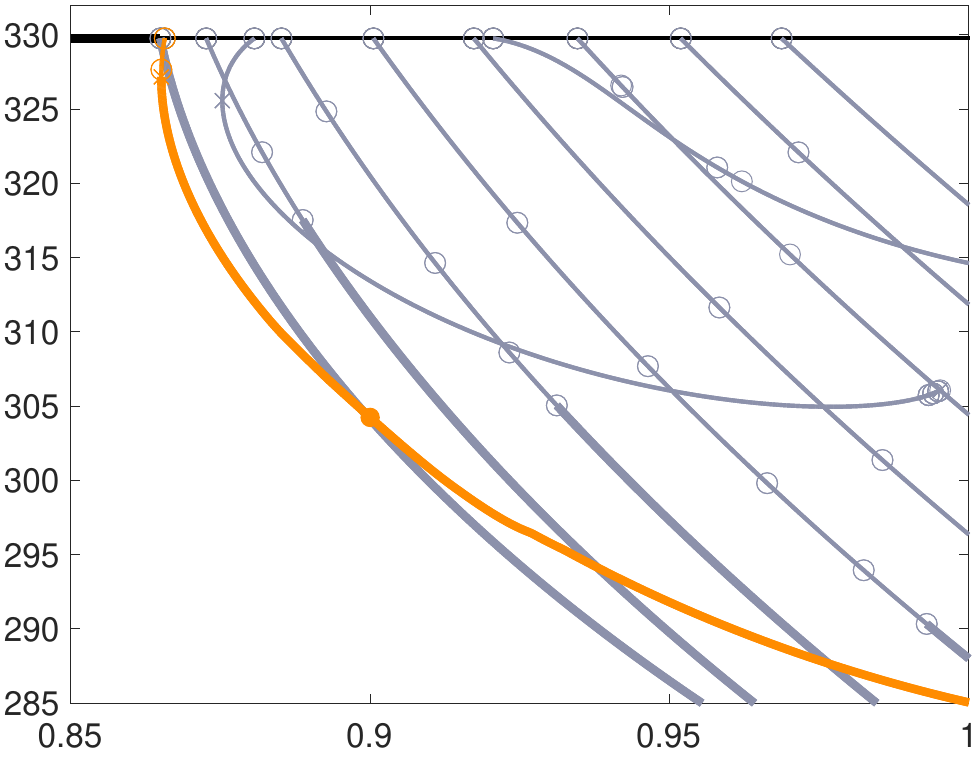}
  	\put(80,-5){$\sigma/d_R$}
  	\put(-15,40){$||R||_{L^1}$}
        \put(30,27){\color{orange}\vector(1,1){5}}
    \end{overpic}
    \caption{\label{fig:bif_sigma}}
    \end{subfigure}
    \hspace{-0.5cm}
    \begin{subfigure}[t]{0.5\textwidth}
    \centering
    \begin{overpic}[scale=0.45]{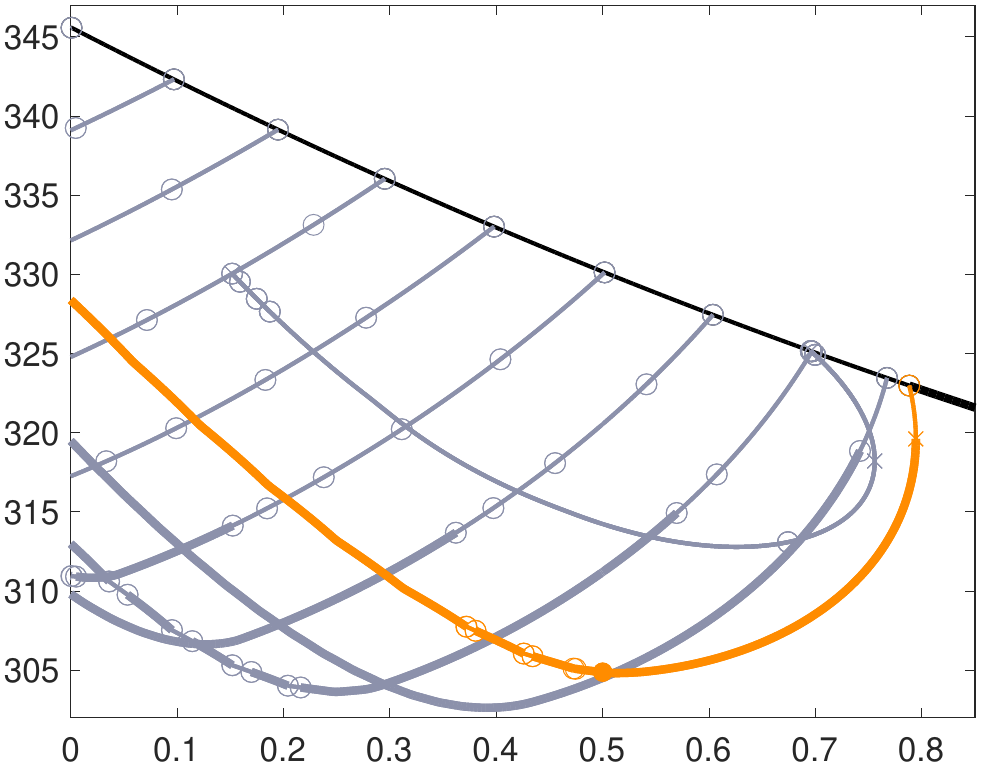}
  	\put(80,-5){$s$}
        \put(68,18){\color{orange}\vector(-1,-1){5}}
    \end{overpic}
    \caption{\label{fig:bif_s}}
    \end{subfigure}
    \caption{Bifurcation diagram with respect to (a) $\sigma$ in scenario~\eqref{scenario:ii} and (b) parameter $s$ in  scenario~\eqref{scenario:iii}. The black line corresponds to the homogenous branch, while branches of nonhomogenous solutions are shown in grey and orange. In (a), the orange branch is the second branch bifurcating subcritically from the homogeneous branch. In (b), the orange branch is the first branch bifurcating subcritically from the homogeneous branch. Solutions in scenario~\eqref{scenario:ii} and in scenario~\eqref{scenario:iii} in Figure~\ref{fig:comp_RT_1D} 
    corresponds to the markers on the orange branches in (a) at $\sigma/d_R=0.9$ and in (b) at $s=0.5$.}
    \label{fig:bif_diag}
\end{figure}



\section{Concluding remarks}\label{sec:conclusion}

In this work, we have developed and analysed a new cross-diffusion model for vegetation dynamics that focuses only on biomass--water interactions. Differently from previous biomass--water models, the three effects induced by autotoxicity on the biomass dynamics -- namely growth-inhibition, extra-mortality, and propagation-reduction -- have been derived from fast-reaction limits. The novelty of the paper has an impact on different aspects. On the one hand, the cross-diffusion model has been justified by the exploitation of time-scale arguments and led to the particular form of the cross-diffusion term: instead of what happens in the SKT model, where the presence of the other species causes extra movements of individuals, here autotoxicity leads to a reduction in the corresponding cross-diffusion term of the biomass equation. On the other hand, an additional crucial novelty of our model consists in the presence of stable Turing patterns even without taking into account water dynamics; growth-inhibition and extra-mortality effects, in fact, are alone insufficient to support the emergence of such patterns, as shown in several examples in the literature (see e.g.~\cite{Carteni_2012, Salvatori_2023}). Retrieving Turing patterns by sole means of a cross-diffusion mechanism between biomass and autotoxicity -- in the absence of explicit plant-water feedback -- thus broadens the investigation of vegetation spatial structures beyond arid ecosystems. We remark that the parameter values leading to spatial patterns were here fixed in agreement with other studies~\cite{Carteni_2012, De_Baets_2007, martinez1998belowground}, rather than chosen specifically for our model. 
Among the stable patterns observed in our numerical investigation (both by means of direct simulation and continuation), it is worth noting the presence of double-peak pulses of biomass in one-dimensional domains -- which are equivalent to reinforced boundaries for the irregular spots forming on two-dimensional domains -- for a wide region of parameter space. This type of patterns (also found e.g.~in \cite{Bera_2025,Jaibi_2020, rottschafer1998transition}) are observed in various ecosystems, see e.g.~\cite{Salvatori_2023,Yang_2018,Zotti_2025}. Moreover, we remark that, differently from standard Turing patterns induced by biomass--water feedback, the biomass density between the spots is not zero, mimicking the presence of uniform vegetation rather than bare soil within the pattern.
As observed in our analysis and confirmed by the thorough investigation of the system's bifurcations performed in \texttt{pde2path}, the formation of these stable, complex patterns occurs for a large set of parameter values, i.e.~is an inherent feature of our cross-diffusion model. We also remark that the results shown here for $\theta(T)$ as the piecewise function in Eq.~\eqref{eq:pqt} are valid also for a smooth function of Holling III-type as long as $\theta(T_*) \approx T_*/\hat{T}$ and $\theta'(T_*) \approx 1/\hat{T}$. \\
In the future, we plan to undertake several research directions. First, we aim to perform a weakly nonlinear analysis, in order to derive the equation governing the amplitude of the patterns and predict the type of emerging bifurcations (as in \cite{solopaper} for the SKT model). By looking at the neutral stability curves, we can also identify a possible location in the parameter space of Hopf bifurcations, possibly leading to time-periodic patterns. Moreover, as we expect the model to also exhibit travelling patterns because of its intrinsic multiscale structure, we aim to apply Geometric Singular Perturbation Theory (GSPT) to our cross-diffusion system; this might require an extension of the available techniques in the field, as they have not been applied to this type of systems yet (to the best of our knowledge). 
Finally, a particularly interesting research goal from the ecological and modelling viewpoint consists in considering an additional plant species to investigate how competition for resources and plant--soil feedback can influence (and potentially foster) spatial patterns of biodiversity.

\bigskip

\section*{Data availability} 
The MATLAB code used to perform both direct simulations and numerical continuation is available on GitHub at \href{https://github.com/aiuorio28/crosstox}{https://github.com/aiuorio28/crosstox}.


\section*{Acknowledgements} 
FG is a member of the INdAM-GNCS, while AI and CS are members of the INdAM-GNFM. AI and CS have received partial support from CIRM in the framework of the ``Research in pairs'' programme. CS has received funding from the Programma Giovani Ricercatori ``Rita Levi Montalcini'' 2021. CS was partially supported by the University of Parma through
the action Bando di Ateneo 2022 per la ricerca co-funded by MUR-Italian Ministry of Universities and Research - D.M. 737/2021 - PNR - PNRR - NextGenerationEU (project:
``Collective and self-organised dynamics: kinetic and network approaches''). FG was partially supported by PRIN 2022 PNRR P2022WC2ZZ ``A multidisciplinary approach to evaluate ecosystems resilience under climate change''. Finally, the authors thank Fabrizio Cartenì for the fruitful discussions that significantly contributed to the development of this work.

\bibliography{references}{}

\providecommand{\noopsort}[1]{}
\begin{thebibliography}{10}

\bibitem{Abbas_2025}
M.~Abbas, F.~Giannino, A.~Iuorio, Z.~Ahmad, and F.~Calabrò.
\newblock {PDE} models for vegetation biomass and autotoxicity.
\newblock {\em Mathematics and Computers in Simulation}, 228:386–401, 2025.

\bibitem{Abbas_2024}
M.~Abbas, F.~Giannino, A.~Iuorio, and F.~Calabr\`o.
\newblock Enhanced forecasting of biomass-toxicity-water models using numerical
  simulations.
\newblock {\em Advances in Computational Science and Engineering},
  2(4):380–404, 2024.

\bibitem{Allegrezza_2022}
M.~Allegrezza, G.~Bonanomi, M.~Zotti, F.~Carten{\`{\i}}, M.~Moreno,
  L.~Olivieri, M.~Garbarino, G.~Tesei, F.~Giannino, and S.~Mazzoleni.
\newblock {Biogeography and shape of fungal fairy rings in the Apennine
  mountains, Italy}.
\newblock {\em Journal of Biogeography}, 49(2):353--363, 2022.

\bibitem{Bastiaansen_2020_1}
R.~Bastiaansen, M.~Chirilus-Bruckner, and A.~Doelman.
\newblock Pulse solutions for an extended {K}lausmeier model with spatially
  varying coefficients.
\newblock {\em {SIAM} Journal on Applied Dynamical Systems}, 19(1):1--57, 2020.

\bibitem{Bastiaansen_2019_1}
R.~Bastiaansen and A.~Doelman.
\newblock The dynamics of disappearing pulses in a singularly perturbed
  reaction{\textendash}diffusion system with parameters that vary in time and
  space.
\newblock {\em Physica D: Nonlinear Phenomena}, 388:45--72, 2019.

\bibitem{Bastiaansen_2020_2}
R.~Bastiaansen, A.~Doelman, M.B. Eppinga, and M.~Rietkerk.
\newblock The effect of climate change on the resilience of ecosystems with
  adaptive spatial pattern formation.
\newblock {\em Ecology Letters}, 23(3):414--429, 2020.

\bibitem{Bera_2025}
B.K. Bera, O.~Tzuk, J.J.R. Bennett, U.~Dieckmann, and E.~Meron.
\newblock Can spatial self-organization inhibit evolutionary adaptation?
\newblock {\em Journal of The Royal Society Interface}, 22(222):20240454, 2025.

\bibitem{Bonanomi_2014}
G.~Bonanomi, G.~Incerti, A.~Stinca, F.~Carten{\`{\i}}, F.~Giannino, and
  S.~Mazzoleni.
\newblock Ring formation in clonal plants.
\newblock {\em Community Ecology}, 15(1):77--86, 2014.

\bibitem{Bonanomi_2022}
G.~Bonanomi, M.~Zotti, M.~Idbella, P.~Termolino, V.~De~Micco, and S.~Mazzoleni.
\newblock {Field evidence for litter and self‐DNA inhibitory effects on Alnus
  glutinosa roots}.
\newblock {\em New Phytologist}, 236(2):399–412, 2022.

\bibitem{Carteni_2012}
F.~Carten{\`{\i}}, A.~Marasco, G.~Bonanomi, S.~Mazzoleni, M.~Rietkerk, and
  F.~Giannino.
\newblock {Negative plant soil feedback explaining ring formation in clonal
  plants}.
\newblock {\em Journal of Theoretical Biology}, 313:153--161, 2012.

\bibitem{Carter_2018}
P.~Carter and A.~Doelman.
\newblock Traveling stripes in the {K}lausmeier model of vegetation pattern
  formation.
\newblock {\em {SIAM} Journal on Applied Mathematics}, 78(6):3213--3237, 2018.

\bibitem{Carter_2023_2}
P.~Carter, A.~Doelman, A.~Iuorio, and F.~Veerman.
\newblock Travelling pulses on three spatial scales in a {Klausmeier}-type
  vegetation-autotoxicity model.
\newblock {\em Nonlinearity}, 37(9):095008, 2024.

\bibitem{Consolo_2023}
G.~Consolo, G.~Grif{\`o}, and G.~Valenti.
\newblock Modelling vegetation patterning on sloped terrains: {The} role of
  toxic compounds.
\newblock {\em Physica D: Nonlinear Phenomena}, 459:134020, 2023.

\bibitem{De_Baets_2007}
S.~De~Baets, J.~Poesen, A.~Knapen, G.G. Barberá, and J.A. Navarro.
\newblock Root characteristics of representative {Mediterranean} plant species
  and their erosion-reducing potential during concentrated runoff.
\newblock {\em Plant and Soil}, 294(1–2):169–183, 2007.

\bibitem{desvillettes2018}
L.~Desvillettes and C.~Soresina.
\newblock Non-triangular cross-diffusion systems with predator--prey reaction
  terms.
\newblock {\em Ricerche di Matematica}, 68:295--314, 2019.

\bibitem{Eigentler_2020}
L.~Eigentler.
\newblock Intraspecific competition in models for vegetation patterns:
  {Decrease} in resilience to aridity and facilitation of species coexistence.
\newblock {\em Ecological Complexity}, 42:100835, 2020.

\bibitem{eliavs2021singular}
J.~Elia{\v{s}}, D.~Hilhorst, M.~Mimura, and Y.~Morita.
\newblock Singular limit for a reaction-diffusion-{ODE} system in a neolithic
  transition model.
\newblock {\em Journal of Differential Equations}, 295:39--69, 2021.

\bibitem{eliavs2022aggregation}
J.~Elia{\v{s}}, H.~Izuhara, M.~Mimura, and B.Q. Tang.
\newblock An aggregation model of cockroaches with fast-or-slow motion
  dichotomy.
\newblock {\em Journal of Mathematical Biology}, 85(3):28, 2022.

\bibitem{Fanelli_2013}
D.~Fanelli, C.~Cianci, and F.~Di~Patti.
\newblock Turing instabilities in reaction-diffusion systems with cross
  diffusion.
\newblock {\em The European Physical Journal B}, 86(4):1--8, 2013.

\bibitem{Gandhi_2021}
P.~Gandhi, L.~Liu, and M.~Silber.
\newblock Stochastic pulsed precipitation model of dryland vegetation pattern
  formation.
\newblock {\em arXiv preprint arXiv:2112.14851}, 2021.

\bibitem{Gandhi_2018}
P.~Gandhi, L.~Werner, S.~Iams, K.~Gowda, and M.~Silber.
\newblock A topographic mechanism for arcing of dryland vegetation bands.
\newblock {\em Journal of The Royal Society Interface}, 15(147):20180508, 2018.

\bibitem{Gilad_2007}
E.~Gilad, J.~von Hardenberg, A.~Provenzale, M.~Shachak, and E.~Meron.
\newblock A mathematical model of plants as ecosystem engineers.
\newblock {\em Journal of Theoretical Biology}, 244(4):680--691, 2007.

\bibitem{grifo2025far}
G.~Grif\`{o}, A.~Iuorio, and F.~Veerman.
\newblock Far-from-equilibrium traveling pulses in sloped semiarid environments
  driven by autotoxicity effects.
\newblock {\em SIAM Journal on Applied Mathematics}, 85(1):188--209, 2025.

\bibitem{iida2006diffusion}
M.~Iida, M.~Mimura, and H.~Ninomiya.
\newblock Diffusion, cross-diffusion and competitive interaction.
\newblock {\em Journal of Mathematical Biology}, 53(4):617--641, 2006.

\bibitem{Iuorio_2021}
A.~Iuorio and F.~Veerman.
\newblock The influence of autotoxicity on the dynamics of vegetation spots.
\newblock {\em Physica D: Nonlinear Phenomena}, 427:133015, 2021.

\bibitem{Jaibi_2020}
O.~Ja{\"i}bi, A.~Doelman, M.~Chirilus-Bruckner, and E.~Meron.
\newblock The existence of localized vegetation patterns in a systematically
  reduced model for dryland vegetation.
\newblock {\em Physica D: Nonlinear Phenomena}, 412:132637, 2020.

\bibitem{Kefi_2007}
S.~K{\'{e}}fi\noopsort{g}, M.~Rietkerk, M.~van Baalen, and M.~Loreau.
\newblock {Local facilitation, bistability and transitions in arid ecosystems}.
\newblock {\em Theoretical Population Biology}, 71(3):367--379, 2007.

\bibitem{Kefi_2014}
S.~K{\'{e}}fi\noopsort{h}, V.~Guttal, W.A. Brock, S.R. Carpenter, V.~Dakos,
  et~al.
\newblock Early warning signals of ecological transitions: {M}ethods for
  spatial patterns.
\newblock {\em {PLoS} {ONE}}, 9(3):e92097, 2014.

\bibitem{Klausmeier_1999}
C.A. Klausmeier.
\newblock Regular and irregular patterns in semiarid vegetation.
\newblock {\em Science}, 284(5421):1826--1828, 1999.

\bibitem{CKCS}
C.~Kuehn and C.~Soresina.
\newblock Numerical continuation for a fast-reaction system and its
  cross-diffusion limit.
\newblock {\em SN Partial Differential Equations and Applications}, 1(2):1--26,
  2020.

\bibitem{Marasco_2020}
A.~Marasco, F.~Giannino, and A.~Iuorio.
\newblock Modelling competitive interactions and plant{\textendash}soil
  feedback in vegetation dynamics.
\newblock {\em Ricerche di Matematica}, 69(2):553--577, 2020.

\bibitem{Marasco_2014}
A.~Marasco, A.~Iuorio, F.~Carten{\`{\i}}, G.~Bonanomi, D.M. Tartakovsky,
  S.~Mazzoleni, and F.~Giannino.
\newblock Vegetation pattern formation due to interactions between water
  availability and toxicity in plant-soil feedback.
\newblock {\em Bulletin of Mathematical Biology}, 76(11):2866--2883, 2014.

\bibitem{martinez1998belowground}
F.~Mart{\'\i}nez, O.~Merino, A.~Mart{\'\i}n, D.~Garc{\'\i}a~Mart{\'\i}n, and
  J.~Merino.
\newblock Belowground structure and production in a mediterranean sand dune
  shrub community.
\newblock {\em Plant and Soil}, 201:209--216, 1998.

\bibitem{Martinez_Garcia_2023}
R.~Mart\'inez-Garc\'ia, C.~Cabal, J.M. Calabrese, E.~Hern{\'a}ndez-Garc{\'i}a,
  C.E. Tarnita, C.~L{\'o}pez, and J.A. Bonachela.
\newblock Integrating theory and experiments to link local mechanisms and
  ecosystem-level consequences of vegetation patterns in drylands.
\newblock {\em Chaos, Solitons {\&} Fractals}, 166:112881, 2023.

\bibitem{Mazzoleni_2015}
S.~Mazzoleni.
\newblock Composition comprising nucleic acids of parasitic, pathogenic or weed
  biological systems for inhibiting and/or controlling the growth of said
  systems, 2015.
\newblock Patent App. WO 02/0624 A2.

\bibitem{Mazzoleni_2010}
S.~Mazzoleni\noopsort{m}, G.~Bonanomi, F.~Giannino, G.~Incerti, S.C. Dekker,
  and M.~Rietkerk.
\newblock {Modelling the effects of litter decomposition on tree diversity
  patterns}.
\newblock {\em Ecological Modelling}, 221(23):2784--2792, 2010.

\bibitem{Mazzoleni_2014}
S.~Mazzoleni\noopsort{n}, G.~Bonanomi, G.~Incerti, M.L. Chiusano, V.~Lanzotti,
  et~al.
\newblock {Inhibitory and toxic effects of extracellular self-{DNA} in litter:
  a mechanism for negative plant-soil feedbacks?}
\newblock {\em New Phytologist}, 205(3):1195--1210, 2014.

\bibitem{Meron_2012}
E.~Meron.
\newblock {Pattern-formation approach to modelling spatially extended
  ecosystems}.
\newblock {\em Ecological Modelling}, 234:70--82, 2012.

\bibitem{morgan2024singular}
J.~Morgan, C.~Soresina, B.Q. Tang, and B.-N. Tran.
\newblock Singular limit and convergence rate via projection method in a model
  for plant-growth dynamics with autotoxicity.
\newblock {\em arXiv preprint arXiv:2408.06177}, 2024.

\bibitem{Rietkerk_2021}
M.~Rietkerk, R.~Bastiaansen, S.~Banerjee, J.~van~de Koppel, M.~Baudena, and
  A.~Doelman.
\newblock Evasion of tipping in complex systems through spatial pattern
  formation.
\newblock {\em Science}, 374(6564):eabj0359, 2021.

\bibitem{Ritchie_2022}
J.S. Ritchie, A.L. Krause, and R.A. Van~Gorder.
\newblock {Turing} and wave instabilities in hyperbolic reaction–diffusion
  systems: {The} role of second-order time derivatives and cross-diffusion
  terms on pattern formation.
\newblock {\em Annals of Physics}, 444:169033, 2022.

\bibitem{rottschafer1998transition}
V.~Rottsch{\"a}fer and A.~Doelman.
\newblock On the transition from the {G}inzburg-{L}andau equation to the
  extended {F}isher-{K}olmogorov equation.
\newblock {\em Physica D: Nonlinear Phenomena}, 118(3-4):261--292, 1998.

\bibitem{Salvatori_2023}
N.~Salvatori, M.~Moreno, M.~Zotti, A.~Iuorio, F.~Carten{\`{\i}}, G.~Bonanomi,
  S.~Mazzoleni, and F.~Giannino.
\newblock Process based modelling of plants-fungus interactions explains fairy
  ring types and dynamics.
\newblock {\em Scientific Reports}, 13(1):19918, 2023.

\bibitem{Sherratt.2010}
J.A. Sherratt.
\newblock Pattern solutions of the {K}lausmeier model for banded vegetation in
  semi-arid environments {I}.
\newblock {\em Nonlinearity}, 23(10):2657--2675, 2010.

\bibitem{shigesada1979spatial}
N.~Shigesada, K.~Kawasaki, and E.~Teramoto.
\newblock Spatial segregation of interacting species.
\newblock {\em Journal of Theoretical Biology}, 79(1):83--99, 1979.

\bibitem{solopaper}
C.~Soresina.
\newblock Hopf bifurations in the full {SKT} model and where to find them.
\newblock {\em Discrete and Continuous Dynamical Systems Series S},
  15(9):2673--2693, 2022.

\bibitem{pde2path}
H.~Uecker.
\newblock Hopf bifurcation and time periodic orbits with \texttt{pde2path}
  {\textendash} {A}lgorithms and applications.
\newblock {\em Communications in Computational Physics}, 25(3):812--852, 2019.

\bibitem{Uecker_2021}
H.~Uecker.
\newblock {\em {Numerical Continuation and Bifurcation in Nonlinear PDEs}}.
\newblock Society for Industrial and Applied Mathematics, 2021.

\bibitem{villar2024}
E.~Villar-Sep{\'u}lveda, A.R. Champneys, and A.L. Krause.
\newblock Designing reaction-cross-diffusion systems with {Turing} and wave
  instabilities.
\newblock {\em arXiv preprint arXiv:2409.06860}, 2024.

\bibitem{von_Hardenberg_2001}
J.~von Hardenberg, E.~Meron, M.~Shachak, and Y.~Zarmi.
\newblock Diversity of vegetation patterns and desertification.
\newblock {\em Physical Review Letters}, 87(19):198101, 2001.

\bibitem{Yang_2018}
C.~Yang, J.~Li, F.~Zhang, N.~Liu, and Y.~Zhang.
\newblock The optimal {Redfield N: P ratio caused by fairy ring fungi
  stimulates plant productivity in the temperate steppe of China}.
\newblock {\em Fungal Ecology}, 34:91–98, 2018.

\bibitem{Zotti_2025}
M.~Zotti, G.~Bonanomi, and S.~Mazzoleni.
\newblock Fungal fairy rings: history, ecology, dynamics and engineering
  functions.
\newblock {\em IMA Fungus}, 16:e138320, 2025.

\end{thebibliography}
\bibliographystyle{plain} 

\clearpage

\end{document}